\newtheorem{thm}{Theorem}[section]
\newtheorem{lem}[thm]{Lemma}
\newtheorem{conj}[thm]{Conjecture}
\numberwithin{equation}{section}
\newcommand{\tg}{\tilde{g}}
\newcommand{\rmand}{\quad\hbox{ and }\quad}
\title[Embedding vs extendability of non-bipartite graphs]{Surface embedding of non-bipartite $k$-extendable graphs}
\author{Hongliang Lu}
\address{School of Mathematics and Statistics, Xi'an Jiaotong University, 710049 Xi'an, P. R. China}
\email{luhongliang@mail.xjtu.edu.cn}
\author{David G.L. Wang}
\address{School of Mathematics and Statistics, Beijing Institute of Technology, 102488 Beijing, P. R. China}
\email{david.combin@gmail.com}
\keywords{non-bipartite graph, matching extension, surface embedding}
\subjclass[2010]{05C10 05C70 37F20}
\begin{document}

\maketitle

\begin{abstract}
We find the minimum number $k=\mu'(\Sigma)$ for any surface $\Sigma$, such that every $\Sigma$-embeddable non-bipartite graph is not $k$-extendable. In particular, we construct the so-called bow-tie graphs $C_6\bowtie P_n$, and show that they are $3$-extendable. This confirms the existence of an infinite number of $3$-extendable non-bipartite graphs which can be embedded in the Klein bottle.
\end{abstract}

\section{Introduction}\label{sec:intro}
A matching~$M$ of a graph~$G$ is said to be \emph{extendable} if~$G$ has a
perfect matching containing~$M$.
Much attention to the theory of matching extension has been paid since it was introduced by Plummer~\cite{Plu80} in~1980.
We recommend Lov\'asz and Plummer's book~\cite{LP86B} for an excellent survey of the matching theory, 
and~\cite{Plu08BC,YL09B} for recent progress.
Interests in the matching extensions of graphs embedded on surfaces began with the charming result~\cite{Plu88PB} 
that no planar graph is $3$-extendable. 
We refer the reader to Gross and Tucker's book~\cite{GT87B} for basic notions on topological graph theory;
see also~\cite{BW09B}.

Plummer~\cite{Plu88} considered the problem of determining the minimum integer $k$ such that every
$\Sigma$-embeddable graph is not $k$-extendable. 
Based on some partial results of Plummer,
Dean~\cite{Dean92} found the complete answer to this problem.

\begin{thm}[Dean, Plummer]\label{thm:k}
Let $\Sigma$ be a surface of characteristic~$\chi$.
Let $\mu(\Sigma)$ to be the minimum integer $k$
such that every $\Sigma$-embeddable graph is not $k$-extendable. 
Then we have
\begin{equation}\label{ans:mu}
\mu(\Sigma)=\begin{cases}
3,&\text{if the surface~$\Sigma$ is homeomorphic to the sphere};\\[3pt]
2+\lfloor \sqrt{4-2\chi}\rfloor,&\text{otherwise}.
\end{cases}
\end{equation}
\end{thm}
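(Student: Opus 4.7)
The plan is to match an upper bound $\mu(\Sigma)\le 2+\lfloor\sqrt{4-2\chi}\rfloor$ against a matching lower bound coming from explicit constructions.

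For the lower bound on a non-spherical $\Sigma$ with characteristic $\chi$, I would set $m=2+\lfloor\sqrt{4-2\chi}\rfloor$ and use the complete bipartite graph $K_{m,m}$. A direct application of Hall's theorem to the residual $K_{m-k,m-k}$ left after deleting the endpoints of any $k$-matching shows that $K_{m,m}$ is $(m-1)$-extendable. By the classical Ringel formulas its orientable genus equals $\lceil(m-2)^2/4\rceil$ and its non-orientable genus equals $\lceil(m-2)^2/2\rceil$; in both cases the defining inequality $(m-2)^2\le 4-2\chi$ is exactly what allows $K_{m,m}$ to embed in $\Sigma$, giving $\mu(\Sigma)\ge m$. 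For the sphere the octahedron $K_{2,2,2}$, a planar $2$-extendable graph on six vertices, witnesses $\mu(S^2)\ge 3$ instead.

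For the upper bound, the sphere case $\mu(S^2)\le 3$ is exactly Plummer's 1988 theorem cited in the excerpt. For $\Sigma\not\cong S^2$, I would suppose that $G$ is $k$-extendable and embedded in $\Sigma$ and target the sharp inequality $(k-1)^2\le 4-2\chi$, which is equivalent to $k\le\mu(\Sigma)-1$. Two automatic ingredients are $|V(G)|\ge 2k+2$ and $\delta(G)\ge k+1$, the latter from Plummer's theorem that $k$-extendable implies $(k+1)$-connected. Combined with the generic Euler bound $|E(G)|\le 3|V(G)|-3\chi$, these yield only the weaker $(k-2)^2\le 9-3\chi$; closing the gap to the sharp target is the main task and demands a finer use of $k$-extendability than just minimum degree. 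The target coefficient $4-2\chi$ is precisely what the bipartite Euler bound $|E(G)|\le 2(|V(G)|-\chi)$ would give, so the proof ought to exploit some bipartite-like feature of $k$-extendable embedded graphs.

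I expect the principal obstacle to be exactly this sharpening. A natural angle is to invoke the Tutte--Berge-style structural characterization of $k$-extendable graphs --- $o(G-S)\le |S|-2k$ for every $S\subseteq V(G)$ large enough to absorb a $k$-matching --- to force additional edges beyond those coming from $\delta\ge k+1$, or to rule out certain short face-boundaries in the embedding. Concretely I would try either to show that any $\Sigma$-embedding of $G$ has few triangular faces, pushing the usable Euler inequality toward the bipartite form $|E(G)|\le 2(|V(G)|-\chi)$, or to extract from $G$ a bipartite substructure whose Ringel genus already forces $(k-1)^2\le 4-2\chi$. Carrying either strategy through cleanly, and checking the boundary cases in which $4-2\chi$ is a perfect square so that the floor in $\lfloor\sqrt{4-2\chi}\rfloor$ is achieved with equality, is where I expect the technical weight of the argument to concentrate.
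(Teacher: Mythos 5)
This theorem is not proved in the paper at all: it is quoted from Plummer and Dean, with the only hint being that the original proof ``made a heavy use of the Euler contribution technique'' --- the machinery the paper recalls as \cref{lem:ctrl} and \cref{lem:d:x}. Measured against that, your lower bound is essentially complete and is the standard one: $K_{m,m}$ with $m=2+\lfloor\sqrt{4-2\chi}\rfloor$ is $(m-1)$-extendable, and Ringel's formulas $g(K_{m,m})=\lceil(m-2)^2/4\rceil$, $\tg(K_{m,m})=\lceil(m-2)^2/2\rceil$ reduce its $\Sigma$-embeddability exactly to $(m-2)^2\le 4-2\chi$ in both the orientable and non-orientable cases; the octahedron settles the sphere. (One small point: since the paper takes ``embeddable'' to mean cellularly embeddable, you should add an appeal to the interpolation theorem to get a cellular embedding of $K_{m,m}$ in $\Sigma$ itself rather than only in its minimal surface; this is routine.)

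The upper bound, however, is where the entire weight of the theorem sits, and you have not proved it --- you reduce it correctly to showing $(k-1)^2\le 4-2\chi$ for every $k$-extendable $\Sigma$-embeddable graph, correctly compute that $\delta\ge k+1$ together with the crude Euler bound only yields $(k-2)^2\le 9-3\chi$, and then merely list candidate strategies without executing any. Your instinct to force a ``bipartite-like'' Euler bound globally is also not the right shape of argument: a $k$-extendable embedded graph can have many triangular faces, so one cannot hope to push $|E|\le 3|V|-3\chi$ down to $2(|V|-\chi)$ outright. The missing idea is local. One takes a control point $v$ with $\Phi(v)\ge\chi/|G|$ (\cref{lem:ctrl}) and combines it with the fact that in a $k$-extendable graph a vertex lying in $x$ triangular faces satisfies $d(v)\ge k+1+\lceil x/2\rceil$ when $x\le 2k-2$ and $d(v)\ge 2k+1$ when $x\ge 2k-1$ (\cref{lem:d:x}); substituting either branch into $d(v)/4-x/12\le 1-\chi/|G|$ together with $|G|\ge 2k+2$ gives exactly $(k-1)^2\le 4-2\chi$, i.e.\ the coefficient $4-2\chi$ you identified as the target. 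Without this degree-versus-triangle lemma (or an equivalent), your proposal is an accurate road map, but the step it defers is precisely the theorem's content.
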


Its proof made a heavy use of the Euler contribution technique,
which dates back to Lebesgue~\cite{Leb40},
developed by Ore~\cite{Ore67},
and flourished by Ore and Plummer~\cite{OP69}.

In a previous paper~\cite{LW13X}, 
we extended \cref{thm:k} by finding the minimum integer $k$
such that there is no $\Sigma$-embeddable $(n,k)$-graphs,
where an $(n,k)$-graph is a graph whose subgraph obtained by removing any~$n$ vertices is $k$-extendable.
This paper continues the study of this embeddable-extendable type of problems.
We dig a little deeper by concentrating on non-bipartite graphs.
Here is our main result.

\begin{thm}\label{thm:k'}
Let $\Sigma$ be a surface of characteristic~$\chi$.
Let $\mu'(\Sigma)$ to be the minimum integer $k$
such that every $\Sigma$-embeddable non-bipartite graph is not $k$-extendable.
Then we have
\begin{equation}\label{ans:mu'}
\mu'(\Sigma)=\begin{cases}
4,
&\text{if $\chi\in\{-1,\,0\}$};\\[3pt]
\lfloor(7+\sqrt{49-24\chi})/4\rfloor,&\text{otherwise}.
\end{cases}
\end{equation}
\end{thm}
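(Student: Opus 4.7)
The proof splits naturally into the two inequalities $\mu'(\Sigma) \le$ (claimed value) and $\mu'(\Sigma) \ge$ (claimed value), which I treat in that order.

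\emph{Upper bound.} In the exceptional cases $\chi \in \{-1, 0\}$, where the claimed value is $4$, the inequality follows immediately from \cref{thm:k}: since $\mu(\Sigma) = 4$ there and trivially $\mu'(\Sigma) \le \mu(\Sigma)$, one has $\mu'(\Sigma) \le 4$. Outside this range the strategy is to refine the Euler-contribution machinery developed by Ore, Plummer, and Dean. Suppose $G$ is a $\Sigma$-embedded $k$-extendable non-bipartite graph with $k = \lfloor(7+\sqrt{49-24\chi})/4\rfloor$; then $\delta(G) \ge k+1$, and
\[
\chi \;=\; \sum_{v} \Phi(v), \qquad \Phi(v) \;=\; 1 - \frac{\deg(v)}{2} + \sum_{F \ni v} \frac{1}{|F|}.
\]
The plan is to use non-bipartiteness to impose a uniform local restriction on the face sizes incident to vertices of minimum degree---either by forbidding certain configurations of triangular faces outright, or via a discharging argument redistributing charge from triangular to neighbouring larger faces---and thereby sharpen the Plummer--Dean estimate enough to force $\sum_v \Phi(v) < \chi$, a contradiction.

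\emph{Lower bound.} Here one needs, for each $\Sigma$, an explicit $\Sigma$-embeddable non-bipartite $(\mu'(\Sigma)-1)$-extendable graph. Outside the exceptional range the natural candidates are the complete graphs $K_{2m}$, which are non-bipartite and $(m-1)$-extendable with (non-)orientable genera supplied by the Ringel--Youngs formula; for each $\Sigma$ one selects the largest $m$ for which $K_{2m}$ embeds in $\Sigma$. In the exceptional range $\chi \in \{-1, 0\}$ the complete-graph construction falls short of the required $3$-extendability at such small genus---the smallest $3$-extendable $K_{2m}$, namely $K_8$, already has non-orientable genus $4$ and orientable genus $2$---and the bow-tie graphs $C_6 \bowtie P_n$ announced in the abstract are exactly what fills the gap. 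One needs to verify non-bipartiteness of $C_6 \bowtie P_n$, exhibit explicit $2$-cell embeddings in the torus and the Klein bottle (and, by adding a handle or crosscap, in surfaces of characteristic $-1$), and prove $3$-extendability by a case analysis on how an arbitrary $3$-edge matching is distributed inside the graph.

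\emph{Main obstacle.} The technical heart is the discharging step in the upper bound. Dean and Plummer use essentially only $\delta(G) \ge k+1$ together with Euler's formula; here one has to exploit the seemingly much weaker hypothesis ``non-bipartite'' to extract a strictly sharper face-size inequality uniformly across all vertices, even though the short odd cycles witnessing non-bipartiteness can be highly localised. Designing and justifying the precise discharging rule whose outcome is the bound $(7+\sqrt{49-24\chi})/4$---in place of Dean--Plummer's $2+\sqrt{4-2\chi}$---is where I expect the real work to lie.
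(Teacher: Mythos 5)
Your overall architecture (upper bound via Euler contributions, lower bound via complete graphs generically plus special constructions near $\chi=0$) matches the paper, but the step you yourself flag as the ``main obstacle'' --- how non-bipartiteness sharpens the Plummer--Dean estimate --- is precisely the step you have not supplied, and the mechanism you propose for it is not the one that works. There is no local discharging rule and no face-size restriction extracted from odd cycles anywhere in the argument; as you observe, the odd cycles witnessing non-bipartiteness can be completely localised, so no \emph{uniform} local constraint on the faces at a control point is available. What the paper actually does for $\chi\le -2$ is a dichotomy on the \emph{order} of a putative $n$-extendable non-bipartite $\Sigma$-embeddable graph $G$ with $n=\lfloor(7+\sqrt{49-24\chi})/4\rfloor$. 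First, $|G|=2n+2$ is excluded by \cref{lem:2k+2} (a connected $k$-extendable graph of order $2k+2$ is $K_{2k+2}$ or $K_{k+1,k+1}$; non-bipartiteness forces $K_{2n+2}$, whose genus is too large). If $2n+4\le|G|\le 4n$, the Lou--Yu theorem (\cref{thm:LY:bip}) says a non-bipartite $n$-extendable graph of order at most $4n$ has $\kappa(G)\ge 2n$, hence $\delta(G)\ge 2n$; feeding $d(v)\ge 2n$ and $x\le 2n-2$ into \cref{lem:ctrl} with denominator $|G|\ge 2n+4$ yields $n\le(1+\sqrt{81-24\chi})/4$, contradicting the definition of $n$. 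If $|G|\ge 4n+2$, only $\delta(G)\ge n+1$ and \cref{lem:d:x} are used, but the larger denominator $4n+2$ in the control-point inequality gives $n\le(5+\sqrt{49-16\chi})/4$, which fails for all $\chi\le-2$ (the five values $\chi\in\{-6,\dots,-2\}$ are checked by hand). So non-bipartiteness enters \emph{globally}, through connectivity and through excluding $K_{n+1,n+1}$, not through the local geometry of the embedding. Without this order-dichotomy-plus-Lou--Yu input your upper bound does not go through.

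Two further points on your lower bound. For the torus you propose to embed the bow-tie graph $C_6\bowtie P_n$; the paper only establishes an $N_2$-embedding of that graph (the added edges $v_{i1}v_{m+2-i,n}$ realise a cross-cap identification), and for $S_1$ it instead uses $C_6\times C_5$, shown $3$-extendable in \cref{thm:CC} --- you would need to either prove the bow-tie graph toroidal or substitute such a toroidal example. And for $\chi\in\{1,2\}$ (projective plane and sphere) the complete graphs again fall short of the required $2$-extendability, so you also need the planar non-bipartite $2$-extendable graph $P_4\times C_5$ of \cref{thm:PC}; your proposal leaves these two surfaces uncovered.
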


Non-bipartite graphs differ from bipartite graphs in many aspects, even if we are concerned with only matching problems. 
For instance, K\"onig theorem states that 
the maximum size of a matching in a bipartite graph equals the minimum size of a node cover; 
see Rizzi~\cite{Riz00} for a short proof.
Taking a triangle as the graph under consideration, 
one may see immediately that non-bipartite graphs do not admit this beautiful property in general.

Another example is on the algorithmic complexity.
Lakhal and Litzler~\cite{LL98} discovered a polynomial-time algorithm
for the problem of finding the extendability of a bipartite graph. 
It is still unknown that whether the same extendability problem for non-bipartite graphs can be solved in polynomial time or not;
see Plummer~\cite{Plu94}.

The sharp distinction between the appearances of \cref{ans:mu,ans:mu'},
also supports the above difference between bipartite and non-bipartite graphs in the theory of matching extensions.

A big part of the proof of \cref{thm:k'} is to show the $3$-extendability 
of some so-called bow-tie graphs. We think the family of bow-tie graphs is interesting also on its own right.
We will confirm the infinity of the number of $3$-extendable graphs 
which can be embedded onto the Klein bottle, and which are non-bipartite.
An infinity number of such, but bipartite, graphs, 
were constructed recursively by Aldred, Kawarabayashi, and Plummer~\cite{AKP08}.

This paper is organized as follows. 
In the next section we list necessary notions and notations,
as well as necessary known results in the field of surface embedding and matching extension of graphs.
The stand-alone \cref{sec:bowtie} is devoted to the extendability 
of the two families of Cartesian product graphs of paths and cycles,
and of the bow-tie graphs denoted as $C_6\bowtie P_n$. 
We also pose a conjecture for the $3$-extendability of the general bow-tie graphs.
In \cref{sec:pf} we establish \cref{thm:k'} with the aid of these extendability results.

\section{Preliminaries}\label{sec:preliminary}\label{sec:pre}

This section contains an overview of necessary notion and notation.
Let $G=(V,E)$ be a simple graph.
We denote the number $|V(G)|$ of vertices by $|G|$ for short.
Denote by~$\delta(G)$ the minimum degree of~$G$,
and by~$\kappa(G)$ the connectivity.

\subsection{The surface embedding}

A surface is a connected compact Hausdorff space 
which is locally homeomorphic to an open disc in the plane. 
If a surface $\Sigma$ is obtained from the sphere by adding some number $g$ 
of handles (resp., some number $\tg$ of cross-caps), 
then $\Sigma$ is said to be orientable of genus~$g$ 
(resp., non-orientable of non-orientable genus~$\tg$). 
We shall follow the usual convention of denoting the surface of genus~$h$ 
(resp., non-orientable genus~$k$) by~$S_h$ (resp., $N_k$).

For a general surface~$\Sigma$, 
let~$g(\Sigma)$ be the genus of~$\Sigma$.
The Euler characteristic~$\chi(\Sigma)$ is defined by
\[
\chi(\Sigma)=\begin{cases}
2-2g(\Sigma),&\text{if $\Sigma$ is orientable},\\
2-g(\Sigma),&\text{if $\Sigma$ is non-orientable}.
\end{cases}
\]
A \emph{$2$-cell} (or {\em cellular}) embedding of a graph $G$ onto a surface is
a drawing of the graph $G$ on the surface such that the edges of $G$ crosses 
only at the vertices of $G$, and that every face is homeomorphic to an open disk.
In this paper, we wording ``embedding'' always means cellular embedding.
We say that a graph $G$ is {\em $\Sigma$-embeddable}
if there exists an embedding of the graph~$G$ on the surface~$\Sigma$.
The minimum value~$g$ such that $G$ is $S_g$-embeddable
is said to be the {\em genus} of~$G$, denoted $g(G)$.
Any embedding of~$G$ on~$S_{g(G)}$
is said to be a {\em minimal (orientable) embedding}.
Similarly,
the minimum value~$\tg$ such that $G$ is $N_{\tg}$-embeddable
is said to be the {\em non-orientable genus} of~$G$,
denoted $\tg(G)$.
Any embedding of~$G$ on~$N_{\tg(G)}$
is said to be a {\em minimal (non-orientable) embedding}.
Working on minimal embeddings, one should notice
the following two fundamental results,
which are due to Youngs~\cite{You63} and Parsons et al.~\cite{PPPV87} respectively.

\begin{thm}[Youngs]
Every minimal orientable embedding of a graph is 2-cell.
\end{thm}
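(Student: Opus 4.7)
The plan is to argue by contradiction. Assume $G$ is connected (the disconnected case reduces to this by the additivity of the genus over components), and suppose $G$ admits an embedding on $S_g$ with $g=g(G)$ that has some face $F$ which is not an open disk. The strategy is to use $F$ to produce a simple closed curve $\gamma\subset F$ that is disjoint from $G$ and non-contractible in $S_g$; cutting $S_g$ along $\gamma$ and capping off the resulting boundary circles with disks then re-embeds $G$ on an orientable surface of strictly smaller genus, contradicting the minimality of~$g(G)$.

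\textbf{Step 1: Producing an essential curve inside $F$.} View $F$ as a connected open $2$-submanifold of $S_g$. Since $F$ is not a disk, $\pi_1(F)\neq 1$, so there is a simple closed curve $\gamma\subset F$ that does not bound a disk inside~$F$. One must then upgrade $\gamma$ to be non-contractible in all of $S_g$: if $\gamma$ bounded a disk $D\subset S_g$, then $D$ meets $G$ only in its interior (as $\gamma\subset F$ is disjoint from $G$); in the subcase $D\cap G=\emptyset$, connectedness of $D$ together with $\partial D=\gamma\subset F$ forces $D\subset F$, contradicting the choice of $\gamma$, while in the subcase $D\cap G\neq\emptyset$ an innermost-disk/general-position argument on $D\cap G$ yields a smaller disk in some face, again contradicting the choice. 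In either case one obtains the desired $\gamma\subset F$ essential in $S_g$.

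\textbf{Step 2: Surgery.} Cut $S_g$ along $\gamma$. If $\gamma$ is non-separating in $S_g$, the result is a connected orientable surface of genus $g-1$ with two boundary circles; capping these with disks gives an embedding of $G$ on $S_{g-1}$. If $\gamma$ is separating, cutting yields two compact surfaces with genera $g_1,g_2$ satisfying $g_1+g_2=g$; non-contractibility of $\gamma$ prevents either side from being a disk, so $g_1,g_2\ge 1$. The connectivity of $G$ forces $G$ to lie entirely in one piece, which after capping has genus $g_i<g$. In both cases we obtain an orientable embedding of $G$ on a surface of genus strictly less than $g(G)$, contradicting minimality.

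The main obstacle is the topological content of Step 1: promoting non-contractibility in $F$ to non-contractibility in $S_g$, which requires a careful analysis of intersections with $G$ via an innermost-disk argument. Once this is established, the cut-and-cap construction in Step 2 is the classical surgery and closes out the contradiction.
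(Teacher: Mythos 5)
The paper does not prove this statement; it is quoted from Youngs' 1963 article, so there is no in-house argument to compare against and your surgery proof must stand on its own. Its overall shape (find a suitable simple closed curve in a non-disk face, cut and cap, contradict minimality of the genus) is the classical route, but Step~1 contains a genuine gap in the subcase $D\cap G\neq\emptyset$. You claim an innermost-disk argument ``yields a smaller disk in some face, again contradicting the choice,'' but producing a small disk in some face contradicts nothing about your chosen $\gamma$, and in fact no contradiction is available at that point: a curve that is essential in $F$ can perfectly well bound a disk in $S_g$ whose interior contains part or all of $G$. For example, embed a contractible cycle $C$ in the torus; the non-disk face $F$ (a torus minus a closed disk) contains a boundary-parallel curve $\gamma$ that is essential in $F$ (it represents a commutator in the free group $\pi_1(F)$) yet bounds a disk in $S_1$ containing $C$. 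So Step~1 does not deliver a curve essential in $S_g$, and the assertion in Step~2 that ``non-contractibility of $\gamma$ prevents either side from being a disk'' rests on a property you have not established.

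The proof is salvageable because Step~2 never needed $\gamma$ to be essential in $S_g$, only in $F$. In the separating case write $S_g=S_1\cup_\gamma S_2$ with $G\subset S_1$ (by connectedness of $G$). Since $S_2\setminus\gamma$ is connected, disjoint from $G$, and accumulates on $\gamma\subset F$, it lies entirely in the face $F$; hence if $S_2$ were a disk, $\gamma$ would bound a disk inside $F$, contradicting its choice. Therefore $g(S_2)\ge1$ and capping $S_1$ embeds $G$ in genus $g-g(S_2)<g$; whether $S_1$ is a disk is irrelevant. With this replacement for Step~1 the argument closes. Two smaller points: a disconnected graph admits no $2$-cell embedding at all (the $1$-skeleton of a connected CW-complex is connected), so the opening reduction ``by additivity of genus'' should instead be the remark that the statement is only meaningful for connected graphs; and the existence of a curve in $F$ not bounding a disk in $F$ when $F$ is not an open disk deserves the one-line justification that a face of a (tame) graph embedding is an open surface of finite type, which is an open disk exactly when it is simply connected.
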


\begin{thm}[Parson, Pica, Pisanski, Ventre]
Every graph has a minimal non-orientable embedding which is 2-cell.
\end{thm}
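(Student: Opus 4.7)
The plan is to begin with any minimal non-orientable embedding $\iota\colon G\hookrightarrow N_k$, where $k=\tg(G)$, and, if it is not already $2$-cell, to modify it by a finite sequence of face-interior surgeries until every face becomes an open disc, without ever exceeding non-orientable genus~$k$. By minimality, the resulting cellular embedding must lie in $N_k$ itself, which is precisely the claim.

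To carry out one such surgery, I would pick a face $F$ whose closure $\overline F$ is not a disc. Then $\overline F$ is a compact surface with boundary of Euler characteristic at most~$0$, so it contains a simple closed curve $\gamma$ in the interior of $F$ that is essential in the ambient $N_k$. (Essentiality in $N_k$ follows from essentiality in $\overline F$ via a general-position argument: any null-homotopy of $\gamma$ in $N_k$ can be pushed off of $G$ and into $F$, so an inessential $\gamma$ would already bound in $\overline F$, contradicting the choice of non-disc face.) I would then split on whether $\gamma$ is one-sided or two-sided in $N_k$. In the one-sided case, cutting along $\gamma$ and capping the single new boundary with a disc gives an embedding of $G$ into $N_{k-1}$, contradicting minimality, so $\gamma$ must be two-sided.

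In the two-sided case, cutting along $\gamma$ and capping the two new boundary circles with discs yields an embedding of $G$ into a new surface $\Sigma'$ with $\chi(\Sigma')=\chi(N_k)+2$ and with strictly simpler topology in the affected region. A case-by-case inspection (separating vs.\ non-separating, orientable vs.\ non-orientable outcome), using that any orientable embedding of $G$ in $S_g$ yields a non-orientable one in $N_{2g+1}$, shows that either the minimality of $k$ is contradicted directly, or we end up in the single residual case $2g+1=k$ with $G$ embedded in $S_g$. In that last case, Youngs' theorem provides a $2$-cell embedding of $G$ in $S_g$, and Dyck's classical identity---a handle on a non-orientable surface equals two crosscaps---lets us convert it combinatorially into a $2$-cell embedding in $N_{2g+1}=N_k$.

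Using a monovariant such as $\sum_F\max\{0,\,1-\chi(\overline F)\}$, which strictly decreases under each surgery step, the procedure terminates in a $2$-cell embedding of $G$ on some $N_{k'}$ with $k'\le k$; minimality then forces $k'=k$. The main obstacle I foresee is precisely this residual orientable sub-case, where the natural cut lands us on an orientable surface and the non-orientability must be re-inserted without creating any new non-disc face; the careful application of Dyck's identity is what lets this last step go through cleanly.
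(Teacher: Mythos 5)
This statement is quoted in the paper as a known theorem of Parsons, Pica, Pisanski and Ventre; the paper gives no proof, so your attempt can only be judged on its own merits. The overall architecture is reasonable (surger along an essential curve inside a non-disc face, use minimality to rule out most outcomes, and reduce to the ``orientably simple'' case $\tilde{g}(G)=2g(G)+1$), but two of your steps do not hold as written. First, your justification that a curve essential in the face is essential in the ambient surface --- ``any null-homotopy of $\gamma$ in $N_k$ can be pushed off of $G$'' --- is false: null-homotopies cannot in general be pushed off an obstacle, and a curve can easily be essential in a face yet bound a disc on the other side of the graph. What is true (and what you actually need) is the weaker statement that a non-disc face of an embedding of a connected graph contains \emph{some} simple closed curve essential in the ambient surface; this requires its own argument. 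Second, your one-sided case is wrong: cutting along a one-sided curve and capping raises $\chi$ by $1$ but can produce an \emph{orientable} surface (cut the core crosscap curve of $N_{2g+1}=S_g\# N_1$ and you land on $S_g$), so this branch does not contradict minimality --- it feeds into the same residual case as the two-sided one.

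The more serious gap is the residual case itself, which is the entire content of the theorem. You dispose of it by saying that Youngs' theorem gives a $2$-cell embedding in $S_g$ and that ``Dyck's identity lets us convert it combinatorially into a $2$-cell embedding in $N_{2g+1}$.'' Dyck's identity is a homeomorphism statement about surfaces, not about embeddings: the obvious conversion --- adding a crosscap inside a face --- destroys cellularity (the modified face becomes a M\"obius band), which is exactly why minimal non-orientable embeddings can fail to be $2$-cell in the first place (e.g.\ $K_7$ in $N_3$). The step you are missing is the edge-twisting argument: choose an edge $e$ lying on two \emph{distinct} faces of the cellular embedding in $S_g$ and reverse its signature; this merges the two faces, lowers $\chi$ by $1$, and yields a cellular embedding in a non-orientable surface, namely $N_{2g+1}$. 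One must then check that such an edge exists, i.e.\ that the minimal orientable embedding has at least two faces; this follows because $\tilde{g}(G)\le\beta(G)$ (the cycle rank) forces $2g(G)+1\le\beta(G)$ in the orientably simple case, hence $F=\beta(G)-2g(G)+1\ge2$. Without this step the proof does not close.
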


The formula of non-orientable genera of complete graphs was found by
Franklin~\cite{Fra34} in~1934 for $K_7$,
and by Ringel~\cite{Rin54} in~1954 for the other~$K_n$.
Early contributors include Heawood, Tietze, Kagno, Bose, Coxeter, Dirac, and so on;
see~\cite{Rin54}.
The more difficult problem of finding the genera of complete graphs
has been explored by Heffter, Ringel, Youngs, Gustin, Terry, Welch, Guy, Mayer, and so on.
A short history can be found in the famous work~\cite{RY68} of Ringel and Youngs in~1968,
who settled the last case.
These formulas are as follows.

\begin{thm}\label{thm:g:K}
Let $n\ge5$. We have
\vskip3pt
\begin{itemize}
\item[(i)]
$\tg(K_7)=3$ and $\tg(K_n)=\lceil (n-3)(n-4)/6\rceil$ when $n\ne7$;
\vskip3pt
\item[(ii)]
$g(K_n)=\lceil (n-3)(n-4)/12\rceil$.
\end{itemize}
\end{thm}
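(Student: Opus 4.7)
My plan divides into a lower bound obtained from Euler's formula and a matching upper bound obtained by constructing explicit embeddings; the latter is by far the main obstacle.

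\emph{Lower bound.} For any $2$-cell embedding of a simple graph $G$ on a surface $\Sigma$, Euler's formula reads $|V(G)|-|E(G)|+F=\chi(\Sigma)$. Since every face is bounded by at least three edges while every edge borders at most two faces, $3F\le 2|E(G)|$, whence
\[
\chi(\Sigma)\le |V(G)|-\frac{1}{3}|E(G)|.
\]
For $G=K_n$ with $n\ge 5$, substituting $|V|=n$ and $|E|=\binom{n}{2}$ and then using $\chi(S_g)=2-2g$ and $\chi(N_{\tg})=2-\tg$, I would conclude, after rounding, that $g(K_n)\ge\lceil(n-3)(n-4)/12\rceil$ and $\tg(K_n)\ge\lceil(n-3)(n-4)/6\rceil$. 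This already yields the full orientable bound and all the non-orientable bounds except the anomalous value for $K_7$.

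\emph{Upper bound.} Equality in the Euler inequality forces a \emph{triangular} embedding, so my strategy is to build one explicitly for every $n$ (orientable) and for every $n\ne 7$ (non-orientable). The classical tool is Ringel's current/voltage graph technique: one encodes the desired embedding as a small labelled quotient graph whose labels lie in a finite group (typically $\mathbb{Z}_n$ or a closely related group), and a Kirchhoff-type ``current'' condition on the labels guarantees that, upon lifting, every face of the embedded $K_n$ is a triangle. The construction breaks into separate cases according to $n\bmod 12$ for the orientable problem and $n\bmod 6$ for the non-orientable one, and each residue class requires its own tailor-made current graph. Producing these current graphs uniformly, i.e.\ the Ringel--Youngs programme completed in 1968, is the deep combinatorial core of the theorem and the step I expect to be hardest by a wide margin.

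\emph{The exceptional case $K_7$.} For $n=7$ the lower bound yields only $\tg(K_7)\ge 2$, whereas the true value is $3$, so a separate obstruction is needed against embedding $K_7$ in the Klein bottle~$N_2$. Supposing such an embedding exists, Euler's formula forces $F=14$ and $3F=2|E|$, so every face is a triangle and the link of every vertex must be a Hamilton $6$-cycle of $K_6$. My plan is to combine this rigidity with a parity argument on the rotation system (for instance a $\mathbb{Z}_2$-homology or orientability-of-dual obstruction typical for triangulated non-orientable surfaces) to derive a contradiction. Once the obstruction is in place, the matching bound $\tg(K_7)\le 3$ is shown by exhibiting an explicit (necessarily non-triangular) rotation system of $K_7$ on $N_3$, where Euler's formula allows $F=13$ and $3F<2|E|$ so one face of size~$4$ suffices.
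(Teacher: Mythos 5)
The paper does not prove this theorem at all: it is quoted as a classical result, with the non-orientable case attributed to Franklin (for $K_7$) and Ringel, and the orientable case to Ringel--Youngs. So there is no ``paper's proof'' to compare against; what you have written is an outline of the standard literature proof, and the outline correctly identifies all three ingredients: the Euler lower bound, triangular embeddings via current graphs for the upper bound, and a separate obstruction for $K_7$ in $N_2$. Your lower-bound computation is sound (given that one may restrict to $2$-cell embeddings, which the paper justifies via the quoted theorems of Youngs and of Parsons et al.), and it does yield exactly $g(K_n)\ge\lceil(n-3)(n-4)/12\rceil$ and $\tg(K_n)\ge\lceil(n-3)(n-4)/6\rceil$.

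However, as a proof the proposal has an essential gap that you yourself flag: the upper bound \emph{is} the theorem. The twelve orientable residue classes (several of which in Ringel--Youngs require not just a current graph but additional handle- or vertex-amalgamation arguments) and the six non-orientable ones are not constructed, only promised; nothing short of exhibiting the current graphs and verifying the Kirchhoff and lifting conditions closes this. Likewise the $K_7$ obstruction is only gestured at; Franklin's actual argument is a concrete combinatorial analysis of the forced triangulation, not merely ``a parity argument on the rotation system.'' One small concrete error: for $K_7$ on $N_3$ you have $F=13$ and the face sizes must sum to $2|E|=42$, so twelve triangles plus one \emph{quadrilateral} gives only $40$; the non-triangular embedding needs, e.g., twelve triangles and one hexagon. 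Since the paper treats the statement as a citation, the appropriate fix is either to cite Franklin, Ringel, and Ringel--Youngs as the paper does, or to accept that a self-contained proof is a substantial undertaking far beyond this sketch.
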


\subsection{The Euler contribution}
Let $G\to\Sigma$ be an embedding of a graph~$G$ on the surface $\Sigma$.
Euler's formula states that 
\[
|G|-e+f=\chi(G),
\] 
where $e$ is the number of edges of $G$,
and $f$ is the number of faces in the embedding.
Let $x_i$ denote the size of the $i$th face containing~$v$, i.e.,
the length of its boundary walk.
The {\em Euler contribution} of~$v$ is defined to be
\[
\Phi(v)=1-{d(v)\over 2}+\sum_{i}{1\over x_i},
\]
where the sum ranges over all faces containing~$v$.
One should keep in mind that
a face may contribute more than one angle to a vertex. This can be seen from
the embedding of~$K_5$ on the torus.
From Euler's formula, in any embedding of a connected graph~$G$,
we have 
\[
\sum_v\Phi(v)=\chi(\Sigma).
\] 
Thus there exists a vertex~$v$ such that
\begin{equation}\label{def:Phi}
\Phi(v)\ge{\chi(\Sigma)\over |G|}.
\end{equation}
Such a vertex is said to be a {\em control point} of the embedding.
Definition~\eqref{def:Phi} implies the following lemma immediately,
see also~\cite[Lemma 2.5]{Plu88} or~\cite[Lemma 2.5]{Dean92} for its proof.

\begin{lem}\label{lem:ctrl}
Let~$G$ be a connected graph of at least $3$ vertices.
Let $G\to\Sigma$ be an embedding.
Let $v$ be a control point which is contained in $x$ triangular faces.
Then we have
\begin{equation}\label{ineq:ctrl}
{d(v)\over 6}\le {d(v)\over 4}-{x\over 12}\le 1-{\chi(\Sigma)\over |G|}.
\end{equation}
\end{lem}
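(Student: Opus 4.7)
The plan is to unfold the definition of $\Phi(v)$, estimate the angle-sum $\sum_i 1/x_i$ by treating triangular faces separately from all other faces, and then combine with the control-point inequality $\Phi(v)\ge\chi(\Sigma)/|G|$.

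First I would recall that at any vertex $v$ of a $2$-cell embedding, the number of face-angles (corners) at $v$ equals $d(v)$, so the sum $\sum_i 1/x_i$ in the definition of $\Phi(v)$ has exactly $d(v)$ terms. Because $G$ is simple with $|G|\ge 3$, every face-boundary walk has length at least~$3$, so each $x_i\ge 3$. Among the $d(v)$ angles at $v$, exactly $x$ belong to triangular faces (a triangle, being bounded by three distinct vertices, meets $v$ in at most one corner, so counting triangular faces containing $v$ is the same as counting triangular angles at $v$); each of the remaining $d(v)-x$ angles lies in a face of length at least~$4$. This yields the upper estimate
\begin{equation*}
\sum_i \frac{1}{x_i}\ \le\ \frac{x}{3}+\frac{d(v)-x}{4}\ =\ \frac{d(v)}{4}+\frac{x}{12}.
\end{equation*}

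Substituting into the definition of $\Phi(v)$ gives $\Phi(v)\le 1-d(v)/4+x/12$ (note the cancellation between the $d(v)/2$ and $d(v)/4$ terms producing $d(v)/4$ with a minus sign). Since $v$ is a control point, $\Phi(v)\ge\chi(\Sigma)/|G|$, and combining these two inequalities and rearranging yields the right-hand inequality of~\eqref{ineq:ctrl}.

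For the left-hand inequality $d(v)/6\le d(v)/4-x/12$, I would simply observe that it is algebraically equivalent to $x\le d(v)$, which is immediate because $x$ counts a subset of the $d(v)$ corners at $v$. No real obstacle arises here; the only point that deserves care is the bookkeeping of angles versus faces, i.e.\ making sure that the $d(v)$ terms in $\sum_i 1/x_i$ are correctly partitioned into $x$ triangular corners and $d(v)-x$ corners of size at least~$4$, which is where the factor $1/12$ appears.
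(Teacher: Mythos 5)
Your proof is correct and is essentially the standard argument: the paper itself does not reprove the lemma but cites Plummer and Dean, whose proofs proceed exactly as you do — bounding $\sum_i 1/x_i$ by $x/3+(d(v)-x)/4$, substituting into $\Phi(v)\ge\chi(\Sigma)/|G|$, and noting $x\le d(v)$ for the left-hand inequality. Your care with the angle-versus-face bookkeeping (each triangular face of a simple graph contributes exactly one corner at $v$, while larger faces may contribute several, all of size at least $4$) is exactly the point that needs attention, and you handle it correctly.
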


\subsection{The matching extension}

Let $G$ be a graph and $k\ge0$.
A {\em $k$-matching} of~$G$ is a collection of~$k$ pairwise disjoint edges.
\emph{Perfect matchings} are $|G|/2$-matchings. 
A \emph{near perfect matching} of the graph~$G$ is a perfect matching of the graph~$G-v$
for some vertex~$v$ of~$G$.
The most basic result for perfect matchings is Tutte's theorem~\cite{Tut47}.

\begin{thm}[Tutte]\label{thm:Tutte}
A graph $G$ has a perfect matching
if and only if for every vertex subset $S$,
the subgraph $G-S$ has at most $|S|$ connected components
with an odd number of vertices.
\end{thm}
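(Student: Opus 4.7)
The plan is to prove necessity and sufficiency separately. For necessity, given a perfect matching $M$ and a vertex subset $S$, each odd component $C$ of $G-S$ must contain at least one vertex matched by $M$ to a vertex outside $C$ (since $|C|$ is odd precludes $M$ from pairing all vertices of $C$ internally); any such outside neighbor must lie in $S$, and distinct odd components account for distinct vertices of $S$. This yields the injection proving $o(G-S)\le|S|$, where $o(\cdot)$ counts odd components.

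For sufficiency I would proceed by contradiction using an edge-maximal counterexample. Suppose $G$ satisfies the Tutte condition but has no perfect matching, with $|V(G)|$ minimum and, subject to that, $|E(G)|$ maximum. Taking $S=\emptyset$ forces $|V(G)|$ to be even. Since adding edges can only merge components and so cannot increase $o(G-S)$ for any $S$, every strict supergraph of $G$ on the same vertex set still satisfies the Tutte condition; by maximality, each such supergraph admits a perfect matching.

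Let $U=\{v\in V(G):\deg_G(v)=|V(G)|-1\}$. The heart of the argument is the claim that every component of $G-U$ is a clique. To establish this I would assume the contrary and extract vertices $a,b,c$ in a common component of $G-U$ with $ab,bc\in E(G)$ and $ac\notin E(G)$, together with some $d$ satisfying $bd\notin E(G)$ (guaranteed because $b\notin U$). Let $M_1$ and $M_2$ be perfect matchings of $G+ac$ and $G+bd$ respectively, which exist by the previous paragraph. The symmetric difference $M_1\triangle M_2$ is a disjoint union of even cycles; by tracing the cycle(s) containing $ac$ and $bd$ and judiciously swapping along subpaths, I would stitch together a perfect matching of $G$ itself, contradicting the choice of $G$.

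Granted the clique-decomposition, assemble a perfect matching of $G$ as follows: match each even component of $G-U$ internally; in each odd component, match all but one vertex internally and pair the leftover vertex with a vertex of $U$, which is possible because the Tutte condition applied to $S=U$ bounds the number of such leftovers by $|U|$ and $U$ induces a clique (so the injection can be realized by actual edges, as every vertex of $U$ is adjacent to every leftover). The remaining vertices of $U$ are even in number by parity of $|V(G)|$ and can be matched among themselves inside the clique $U$. This produces a perfect matching of $G$, contradicting the standing assumption. The main obstacle is the clique-decomposition claim; threading the two perfect matchings of $G+ac$ and $G+bd$ through the cycles of their symmetric difference to extract a perfect matching of $G$ requires the most careful case analysis.
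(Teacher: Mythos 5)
The paper does not prove this statement: Tutte's theorem is quoted verbatim from the cited source \cite{Tut47} as classical background, so there is no in-paper argument to compare yours against. Judged on its own, your outline is the standard proof by edge-maximal counterexample (Lov\'asz's argument, as in Diestel or West), and it is correct in structure. The necessity direction is complete. In the sufficiency direction, the reduction to the clique-decomposition claim, the use of $S=U$ to bound the number of odd components, and the final assembly (near-perfect matchings inside odd cliques, leftovers absorbed by the universal vertices of $U$, the rest of the clique $U$ paired off by parity) are all sound. The one step you leave genuinely open is the case where the edges $ac$ and $bd$ lie on the \emph{same} alternating cycle $C$ of $M_1\triangle M_2$; for the record, the resolution is to delete the two edges $ac$ and $bd$ from $C$, obtaining two paths each joining a vertex of $\{a,c\}$ to a vertex of $\{b,d\}$, take the $M_1$-edges on the path ending at $a$ or $c$ and the $M_2$-edges on the path ending at $b$, and close up with whichever of the genuine edges $ab$ or $bc$ joins the two uncovered endpoints; combined with $M_2$ off $C$ this is a perfect matching of $G$ avoiding $ac$ and $bd$. (Note also that one may assume $ac\in M_1$ and $bd\in M_2$, since otherwise $M_1$ or $M_2$ is already a perfect matching of $G$.) With that case written out, your proof is complete.
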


A $k$-matching of the graph $G$ is said to be {\em perfect} if~$G$ has exactly $2k$ vertices.
The graph~$G$ is said to be {\em $k$-extendable} if
\begin{itemize}
\item
it has a perfect matching, and
\smallskip\item
for any $k$-matching~$M$,
the graph~$G$ has a perfect matching containing~$M$.
\end{itemize}

The following basic property on the connectivity of extendable graphs
can be found in~\cite{Plu80}.

\begin{thm}[Plummer]\label{thm:ext:basic}
Let $k\ge0$ and let~$G$ be a connected $k$-extendable graph. 
Then $G$ is $(k+1)$-connected, and thus $\delta(G)\ge k+1$.
\end{thm}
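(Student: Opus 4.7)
The claim has two parts. The degree bound $\delta(G) \ge k+1$ follows once we have $(k+1)$-connectedness, by the standard inequality $\kappa(G) \le \delta(G)$. So the whole task reduces to showing $\kappa(G) \ge k+1$, which I plan to argue by contradiction: assume $G$ is $k$-extendable yet admits a vertex cut $S$ with $s := |S| \le k$, chosen to be a minimum cut so that every $v \in S$ has at least one neighbor in each component of $G - S$. Let $C_1, \ldots, C_r$ with $r \ge 2$ be those components.

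The goal is to produce a $k$-matching $M$ of $G$ that cannot be extended to a perfect matching, which contradicts $k$-extendability. The guiding principle is that once $M$ saturates $S$, the graph $G - V(M)$ decomposes cleanly along the $C_i$ (no edges survive between them), so the parities of the surviving pieces become a rigid invariant. By Tutte's \cref{thm:Tutte} applied with the empty vertex set, it is enough to arrange that $G - V(M)$ contains an odd component, and then automatically it contains two (since $|V(G)| - 2k$ is even). I would build $M$ in two stages: first, use the minimum-cut property and Hall's theorem to select $s$ edges from $S$ to $V(G) \setminus S$ forming a matching saturating $S$; second, append $k - s$ additional independent edges chosen from within the components, with the choice engineered so that some $C_i$ is left with an odd number of unsaturated vertices.

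The main technical obstacle is the simultaneous control of the Hall-type condition for saturating $S$ and the parities of the $C_i$ after deleting $V(M)$. I expect to dispatch this via a short case analysis. If some $C_i$ already has odd order, choose $M$ with $V(M) \cap C_i = \emptyset$, so that $C_i$ itself stands as the desired odd component in $G - V(M)$. If every $C_i$ has even order, deliberately route a single edge of $M$ into a chosen $C_i$, flipping its parity. In either case, Tutte's condition fails in $G - V(M)$, precluding a perfect matching there, and the resulting contradiction yields $\kappa(G) \ge k+1$, completing the proof.
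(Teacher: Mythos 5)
The paper offers no proof of this statement; it is imported verbatim from Plummer~\cite{Plu80}, so your attempt has to stand on its own. Your overall strategy---use a cutset $S$ with $|S|=s\le k$ to manufacture a $k$-matching $M$ for which $G-V(M)$ violates Tutte's condition (\cref{thm:Tutte})---is the right genre of argument, and your first stage is salvageable: if some $T\subseteq S$ had $|N(T)\setminus S|<|T|$, then $(S\setminus T)\cup(N(T)\setminus S)$ would be a cut of size less than $s$, so Hall's condition for saturating $S$ into $V(G)\setminus S$ really does follow from minimality of the cut. The trouble is that the parity engineering, which you rightly identify as ``the main technical obstacle,'' is then dispatched by assertion. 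In your first case you propose to ``choose $M$ with $V(M)\cap C_i=\emptyset$'' for an odd component $C_i$; that requires a matching saturating $S$ inside $V(G)\setminus S\setminus C_i$, and minimality of $S$ only yields Hall's condition against all of $V(G)\setminus S$, not against the complement of one component. It fails outright when the other components are too small (e.g.\ $r=2$ and $|C_2|=1<s$) and can fail for adjacency reasons even when they are large. In your second case, ``route a single edge of $M$ into a chosen $C_i$'' is not an available move: once $S$ is saturated, every further edge of $M$ lies inside a single component and preserves all parities, so flipping the parity of $C_i$ means changing how many stage-one edges land in each component---and whether a saturating matching with the required distribution of endpoints exists is exactly the crux, left unargued.

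There is a second, independent gap. After stage one you must append $k-s$ further independent edges to reach a genuine $k$-matching, because $k$-extendability says nothing directly about a non-extendable matching of size $s<k$; the implication ``$k$-extendable $\Rightarrow$ $s$-extendable for $s<k$'' is itself a nontrivial theorem of Plummer that you would be silently invoking if you stopped at $|M|=s$. Nothing guarantees that the residual graph $\bigcup_i(C_i-X_i)$ contains $k-s$ independent edges, let alone ones that keep the designated component odd. The proofs in the literature (Plummer~\cite{Plu80}; see also~\cite{YL09B}) sidestep both difficulties, e.g.\ by first proving that $k$-extendable implies $(k-1)$-extendable and inducting on $k$, or by analysing how a perfect matching of $G$ itself distributes its edges between $S$ and the components. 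You would need one of these extra ingredients, or a substantially more careful case analysis of the saturating matchings, before this plan becomes a proof.
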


Liu and Yu~\cite{LY93} found the following result for the extendability of Cartesian product graphs.

\begin{thm}[Liu-Yu]\label{thm:LiuYu}
Let $G_1$ be a $k$-extendable graph and 
$G_2$ be a connected graph. 
Then the Cartesian product $G_1\times G_2$ is $(k+1)$-extendable.
\end{thm}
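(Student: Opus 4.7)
The plan is to fix a $(k+1)$-matching $M$ in $G:=G_1\times G_2$ and build a perfect matching of $G$ containing $M$, arguing by induction on $|V(G_2)|$ after reducing to the case where $G_2$ is a tree. The reduction is immediate: the vertical edges of $M$ project to a matching of $G_2$, which extends to a spanning tree $T$ since $G_2$ is connected; then $M\subseteq E(G_1\times T)$, and any perfect matching of the spanning subgraph $G_1\times T$ containing $M$ is also one of $G_1\times G_2$. So henceforth we may assume $G_2$ is a tree.

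For the base case $G_2=K_2$, denote the two copies of $G_1$ by $L_0,L_1$ and partition $M$ into horizontal matchings $M_0,M_1$ of the two layers and a set $M_v$ of vertical edges. If $|M_0|,|M_1|\le k$, extend $M_0,M_1$ to perfect matchings of $L_0,L_1$ separately using the $k$-extendability of $G_1$ and take the union. The critical subcase is $|M_0|=k+1$ (and hence $|M_1|=|M_v|=0$). Here I would apply a swap trick: pick $e=\{u^{(0)},v^{(0)}\}\in M_0$, extend the $k$-matching $M_0\setminus\{e\}$ to a perfect matching $N$ of $L_0$, and let $w^{(0)}$ and $x^{(0)}$ be the respective $N$-partners of $u^{(0)}$ and $v^{(0)}$. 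Then $N':=(N\setminus\{u^{(0)}w^{(0)},v^{(0)}x^{(0)}\})\cup\{e\}$ is a near-perfect matching of $L_0$ missing exactly the two vertices $w^{(0)}$ and $x^{(0)}$, which can be covered by the vertical edges $w^{(0)}w^{(1)}$ and $x^{(0)}x^{(1)}$; the copy of $N'$ in $L_1$ then covers the remaining vertices of $L_1$, giving a perfect matching of $G_1\times K_2$ containing $M$. The subcases with $|M_v|\ge 1$ are handled analogously.

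For the inductive step with $|V(G_2)|\ge 3$, pick a leaf $v$ of the tree $G_2$ with unique neighbor $u$, and split $M=M^c\cup M'$, where $M^c$ consists of the edges of $M$ lying in the cylinder $G[V(L_u)\cup V(L_v)]\cong G_1\times K_2$ and $M'=M\setminus M^c$ is a matching of $G_1\times(G_2-v)$. Let $A_v\subseteq V(L_v)$ be the $L_v$-endpoints of the vertical edges in $M^c$. I would choose an even-sized set $S_v\subseteq V(L_v)$ with $S_v\supseteq A_v$ and disjoint from the horizontal edges of $M^c$, and produce: (i) a perfect matching of $L_v\setminus S_v$ containing the horizontal edges of $M^c$, via the $k$-extendability of $G_1$ and a local swap if necessary; and (ii) a perfect matching of $G_1\times(G_2-v)$ containing $M'$ whose restriction to $L_u$ leaves unused precisely the $L_u$-copies of $S_v$. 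Joining these two matchings with the vertical edges $\{s^{(v)}s^{(u)}:s\in S_v\}$ then yields a perfect matching of $G$ containing $M$.

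The hard part will be coordinating (i) and (ii) on the shared layer $L_u$: the inductive hypothesis gives $(k+1)$-extendability of $G_1\times(G_2-v)$ but not directly a conditional extension that prescribes which $L_u$-vertices are to be left for vertical matching into $L_v$. I expect to resolve this by choosing $S_v$ simultaneously with the extensions, using the $k$-extendability of $G_1$ to preserve enough freedom at the $L_v$-side to absorb whatever $L_u$-vertices are left over by a suitable application of the inductive hypothesis on the $(G_2-v)$-side.
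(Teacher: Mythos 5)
The paper itself offers no proof of this statement: \cref{thm:LiuYu} is quoted from Liu and Yu~\cite{LY93}, so your proposal can only be judged on its own merits, and it has two genuine gaps. The first is the reduction to trees. The vertical edges of a matching $M$ in $G_1\times G_2$ do \emph{not} in general project to a matching of $G_2$: two vertical edges lying over distinct vertices of $G_1$ are disjoint in the product even when their projections to $G_2$ share an endpoint, or coincide. Worse, the projected edge set can contain a cycle of $G_2$ --- take a triangle $uvw$ in $G_2$ and the three pairwise disjoint vertical edges $(a,u)(a,v)$, $(b,v)(b,w)$, $(c,w)(c,u)$ with $a,b,c$ distinct vertices of $G_1$, which is possible as soon as $k\ge2$ --- and a set of edges containing a cycle extends to no spanning tree. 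So you cannot in general assume $M\subseteq E(G_1\times T)$, and the whole induction on trees loses its footing for arbitrary connected $G_2$.

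The second gap is that, even granting the tree reduction, the inductive step is not carried out: the entire difficulty of the theorem is concentrated in exactly the point you defer in your last paragraph. The induction hypothesis gives only that $G_1\times(G_2-v)$ is $(k+1)$-extendable; it gives no control over which vertices of the shared layer $L_u$ are left uncovered so that they can be saturated by vertical edges into the leaf layer $L_v$. What the argument actually requires is a substantially stronger inductive statement (roughly, a perfect matching of $G_1\times(G_2-v)$ containing $M'$ and simultaneously avoiding a prescribed even subset of one layer, under suitable size constraints), and establishing such a statement is the real content of the theorem. Saying you ``expect to resolve this by choosing $S_v$ simultaneously with the extensions'' names the obstacle rather than removing it. Your base case is essentially sound --- the swap trick for $|M_0|=k+1$ is correct, and the subcases with vertical edges can be repaired by pairing each vertical endpoint with its partner in a suitable perfect matching of $G_1$ --- but note that already there the subgraph $L_0$ minus the vertical endpoints may have odd order, so ``handled analogously'' is hiding a real parity argument. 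As written, the proposal does not prove the theorem.
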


Gy\"ori and Plummer~\cite{GP92} gave the following nice generalization.

\begin{thm}[Gy\"ori-Plummer]\label{thm:GP92}
The Cartesian product of a $k$-extendable graph 
and an $l$-extendable graph is $(k+l+1)$-extendable.
\end{thm}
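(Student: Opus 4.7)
We argue by induction on $\min(k,l)$. When $\min(k,l)=0$, say $l=0$, the second factor $G_2$ is $0$-extendable (has a perfect matching) and, assuming it is connected as is implicit for the product to be meaningfully extendable, \cref{thm:LiuYu} yields the $(k+1)$-extendability of $G_1\times G_2$, which matches the desired $(k+l+1)$-extendability. The case $k=0$ is symmetric. So assume $k,l\ge 1$ and that the statement is known whenever the smaller parameter is strictly less than $\min(k,l)$.

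Let $M$ be a $(k+l+1)$-matching of $G:=G_1\times G_2$, and decompose $M=M_H\sqcup M_V$ into horizontal edges (those projecting to an edge of $G_1$) and vertical edges (those projecting to an edge of $G_2$). After exchanging $G_1\leftrightarrow G_2$ and $k\leftrightarrow l$ if necessary, we may assume $|M_V|\ge l+1$, and in particular $|M_H|\le k$.

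Pick any edge $e=\{(u_0,v_1),(u_0,v_2)\}\in M_V$ and set $G_2':=G_2-\{v_1,v_2\}$. Then $G_2'$ is $(l-1)$-extendable: for any $(l-1)$-matching $N$ of $G_2'$, the matching $N\cup\{v_1v_2\}$ has size $l$ in $G_2$ and hence extends to a perfect matching of $G_2$ by $l$-extendability; deleting $v_1v_2$ produces a perfect matching of $G_2'$ containing $N$. The inductive hypothesis then gives that $G_1\times G_2'$ is $(k+l)$-extendable.

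The remaining task is to glue three ingredients into a perfect matching of $G$ containing $M$: a perfect matching of $G_1\times G_2'$ extending the residual matching $M':=(M\setminus\{e\})\cap E(G_1\times G_2')$ (which has size at most $k+l$ and so extends by the inductive hypothesis), together with two perfect matchings inside the slab copies $G_1^{(v_1)}$ and $G_1^{(v_2)}$ extending $M^h_{v_1}$ and $M^h_{v_2}$ respectively (available because $|M^h_{v_i}|\le |M_H|\le k$ and $G_1$ is $k$-extendable). The main obstacle is to make these three matchings compatible with the ``bridge'' edges of $M$ joining the slab $V(G_1)\times\{v_1,v_2\}$ to its complement: each bridge edge matches a slab vertex from outside the slab, so the chosen perfect matchings in the two slab copies of $G_1$ must leave unmatched exactly those vertices, while the perfect matching of $G_1\times G_2'$ must match the outside endpoints of the bridges in turn. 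A clean way to handle this is to first show, by a counting argument that uses $|V(G_2)|\ge l+2$ from \cref{thm:ext:basic} together with the size bound on $M$, that $e$ may be chosen so that no edge of $M\setminus\{e\}$ meets $\{v_1,v_2\}$; when such a direct choice fails, an $M$-alternating exchange is performed first to reduce to this case. Carrying out this bookkeeping carefully is where the bulk of the argument lies.
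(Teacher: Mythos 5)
The paper does not prove this statement; it is quoted verbatim from Gy\H{o}ri and Plummer \cite{GP92}, where the proof occupies an entire article. That alone should make you suspicious of a half-page induction, and indeed your proposal has a genuine gap precisely at the step you defer.

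First, a repairable flaw: you induct on $\min(k,l)$, but your reduction replaces $(k,l)$ by $(k,l-1)$, and which factor gets peeled is dictated by the matching (whichever direction holds at least $l+1$ edges of $M$), not by which parameter is smaller; when $l>k$ the quantity $\min(k,l)$ does not decrease. Inducting on $k+l$ fixes this. The fatal flaw is the gluing. Your rescue plan is to choose $e=\{(u_0,v_1),(u_0,v_2)\}\in M_V$ so that no other edge of $M$ meets $V(G_1)\times\{v_1,v_2\}$, but this is not achievable in general: take $G_1=G_2=K_4$ (so $k=l=1$) and let $M$ consist of three vertical edges all projecting to the same edge $v_1v_2$ of $G_2$. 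Then $|M_V|=3\ge l+1$, yet for every admissible choice of $e$ the other two edges of $M$ lie inside the slab pair, and no counting argument via $|V(G_2)|\ge l+2$ from \cref{thm:ext:basic} can exclude this configuration. The fallback ``$M$-alternating exchange'' is not an argument: extending $M\triangle C$ for an alternating cycle $C$ yields a perfect matching containing $M\triangle C$, not one containing $M$, and you give no mechanism for converting one into the other. Until the interaction among the residual matching in $G_1\times G_2'$, the horizontal edges trapped in the two slabs, the vertical edges parallel to $e$, and the bridge edges is actually resolved, what you have is a plan for a proof rather than a proof; resolving exactly this interaction is the substance of Gy\H{o}ri and Plummer's paper.
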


Plummer~\cite{Plu88PB} also gave the famous result that no planar graph is $3$-extendable.
The next deeper result is due to Lou and Yu~\cite[Theorem 7]{LY04};
see also~\cite[Chap.~6]{YL09B}.

\begin{thm}[Lou, Yu]\label{thm:LY:bip}
If~$G$ is a $k$-extendable graph of order at most $4k$,
then either~$G$ is bipartite or the connectivity $\kappa(G)$ of~$G$ is at least $2k$.
\end{thm}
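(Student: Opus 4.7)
The plan is a proof by contradiction: suppose $G$ is $k$-extendable, non-bipartite, with $|G|\le 4k$, yet $\kappa(G)\le 2k-1$. By \cref{thm:ext:basic}, $\kappa(G)\ge k+1$, so a minimum cut $S$ has size $s$ with $k+1\le s\le 2k-1$, and $|V(G)\setminus S|\le 3k-1$ leaves only a narrow range of possibilities for the components of $G-S$.

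The first step is to show that $G-S$ has exactly two components $A$ and $B$. If it had three or more, one could exhibit a $k$-matching whose removal leaves a graph violating \cref{thm:Tutte}, because each small component would demand its own matching partners from $S$ while $|S|$ is too small to serve all of them simultaneously. With two components established, the parities of $|A|$, $|B|$, and $s$ are then constrained by the existence of a perfect matching of $G$.

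Next I would use non-bipartiteness. Because no edge joins $A$ to $B$, every odd cycle of $G$ lies in $G[A\cup S]$ or in $G[B\cup S]$; by symmetry, assume the former and let $C$ be a shortest odd cycle there, of length $2t+1\le |A|+s$. The main and hardest step is to build a witness $k$-matching $M$ that fails to extend, contradicting the $k$-extendability of $G$. My plan is to take $t$ consecutive edges of $C$, saturating all but one cycle-vertex $u$, and then to add $k-t$ further edges inside $B$ (or across the cut) chosen so that, after deletion of $V(M)$, a Tutte-type deficiency appears: $u$ together with a suitable odd pocket of $A$-vertices forms an odd component of the residual graph whose only possible $S$-partners have already been exhausted by $M$. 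The delicate balance of choosing the extra $k-t$ edges with enough flexibility while overwhelming the $S$-capacity is where the hypotheses $|G|\le 4k$ and $s\le 2k-1$ must be used in tandem. I expect this to require a case analysis on the intersection $V(C)\cap S$ and, in the boundary cases, a Gallai--Edmonds-style argument to locate the odd pocket inside $A$.
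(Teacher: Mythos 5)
The paper does not prove \cref{thm:LY:bip} at all; it is quoted from Lou and Yu \cite{LY04}, so there is no internal proof to compare yours against. Judged on its own, your outline starts in the natural place (a minimum cutset $S$ with $k+1\le|S|\le 2k-1$ by \cref{thm:ext:basic}, followed by an analysis of the components of $G-S$ and an appeal to non-bipartiteness), but it is a plan rather than a proof: the two load-bearing steps are asserted, not carried out.

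Concretely, first, the claim that $G-S$ has exactly two components is justified only by the heuristic that small components ``demand their own partners from $S$''; with $|S|$ as large as $2k-1$ and up to $3k-1$ vertices outside $S$, a configuration such as three components of sizes roughly $k,k,k-1$ is not visibly excluded by that sentence, and turning it into a Tutte-type contradiction requires choosing an explicit $k$-matching, which you do not do. Second, and more seriously, the witness $k$-matching in the main step is never constructed. A shortest odd cycle $C$ in $G[A\cup S]$ can have length well beyond $2k+1$ (you only bound it by $|A|+s$, which can approach $4k$), so ``$t$ consecutive edges of $C$'' need not fit inside a $k$-matching at all; nothing in the proposal controls the length of $C$ or its intersection with $S$, and this is exactly where the hypothesis $|G|\le 4k$ has to do real work. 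The padding by $k-t$ edges ``inside $B$ or across the cut'' and the resulting Tutte deficiency are described only as expectations (``I expect this to require a case analysis\dots a Gallai--Edmonds-style argument''). Until the non-extendable $k$-matching is exhibited and the violation of \cref{thm:Tutte} is verified, the argument has a genuine gap at its core.
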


We also need the following result, whose proof can be found in~\cite{Dean92,LW13X}.

\begin{lem}\label{lem:d:x}
Let $k\ge1$.
Let~$G$ be a connected $k$-extendable graph embedded on a surface~$\Sigma$.
Let~$v$ be a vertex of~$G$ which is contained in $x$ triangular faces in the embedding.
Then we have
\[
d(v)\ge\begin{cases}
k+1+\lceil x/2\rceil,&\text{if $x\le 2k-2$},\\
2k+1,&\text{if $x\ge 2k-1$}.
\end{cases}
\]
\end{lem}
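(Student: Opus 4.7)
The plan is to argue by contrapositive: suppose $G$ is $k$-extendable and $d:=d(v)$ satisfies both $d\le 2k$ and $d\le k+\lceil x/2\rceil$, and exhibit a $k$-matching $M$ of $G$ with $N(v)\subseteq V(M)$ and $v\notin V(M)$. Such an $M$ makes $\{v\}$ an isolated, and thus odd, component of $G-V(M)$, so $G-V(M)$ has no perfect matching and $G$ cannot be $k$-extendable, a contradiction. The clause ``$d(v)\ge 2k+1$ when $x\ge 2k-1$'' then follows because in that regime $\lceil x/2\rceil\ge k$ forces every $d\le 2k$ to satisfy $d\le k+\lceil x/2\rceil$, so the contradiction just described precludes $d\le 2k$.

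To analyze the local structure around $v$, label $N(v)=\{v_1,\dots,v_d\}$ in the cyclic order of edges at $v$ in the embedding. A triangular face at $v$ is bounded by two consecutive edges $vv_i,vv_{i+1}$ together with a third edge $v_iv_{i+1}\in E(G)$. The $x$ such ``triangle edges'' form a subgraph $H\subseteq G[N(v)]$ which is a disjoint union of paths whose lengths $\ell_1,\ell_2,\dots$ sum to $x$. Since a path of length $\ell$ contains a matching of size $\lceil\ell/2\rceil$ and $\sum_j\lceil\ell_j/2\rceil\ge\lceil x/2\rceil$, the maximum matching $\nu(H)$ satisfies
\[
\nu(H)\ge\left\lceil\frac{x}{2}\right\rceil\ge d-k,
\]
the last inequality being the hypothesis. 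Hence $H$ contains a matching $F$ of size exactly $d-k$, which saturates $2(d-k)$ vertices of $N(v)$; set $U:=N(v)\setminus V(F)$, so $|U|=2k-d$.

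The crux is to complete $F$ to the desired $k$-matching $M$. If each $u\in U$ can be paired with a distinct neighbor $w_u\in V(G)\setminus(\{v\}\cup V(F))$, then $M:=F\cup\{uw_u:u\in U\}$ is a $k$-matching with $N(v)\subseteq V(M)$ and $v\notin V(M)$, as required. Equivalently, since $|F|=d-k\le k$, one may first use $k$-extendability to extend $F$ to a perfect matching $P$ of $G$; the unique $P$-edge at $v$ is some $vv_0$ with $v_0\in U$, and a local alternating-path surgery around $v_0$ reroutes the pairing so that $v_0$ is covered by an edge outside $\{v\}$, producing the desired $M$.

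The main obstacle is precisely to verify the Hall-type condition for the bipartite graph $\bigl(U,\,V(G)\setminus(\{v\}\cup V(F))\bigr)$, or equivalently to justify the alternating-path surgery. The degree bound $\delta(G)\ge k+1$ of Theorem~\ref{thm:ext:basic} alone is not enough; one has to combine it with the $(k+1)$-connectivity of $G$ and the lower bound $|G|\ge 2k+2$ provided by $k$-extendability. A Tutte--Berge argument on $G-V(F)-\{v\}$ then rules out any obstruction of the form $o(G-V(F)-\{v\}-S)>|S|$ for a putative Tutte set $S$, since such an obstruction would itself contradict the $k$-extendability of $G$. This verification is the delicate core of the argument.
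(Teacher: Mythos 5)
Your reduction is the standard one and is set up correctly: assuming $d(v)\le 2k$ and $d(v)\le k+\lceil x/2\rceil$, the triangle edges at $v$ form disjoint paths in $G[N(v)]$ carrying a matching $F$ of size $d(v)-k$, and it remains to deal with the $2k-d(v)$ neighbours in $U=N(v)\setminus V(F)$. (The paper itself gives no proof but defers to Dean and to \cite{LW13X}, where the argument follows exactly this reduction. One small inaccuracy in your setup: when every face at $v$ is triangular, the triangle edges form a cycle rather than a union of paths, and then $\nu(H)=\lfloor x/2\rfloor$ rather than $\lceil x/2\rceil$; under your hypotheses this still yields $\nu(H)\ge d(v)-k$, but the inequality as you state it is off by one in that corner case.)

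The genuine gap is the step you yourself flag as ``the delicate core'': covering $U$. None of the three routes you sketch goes through as stated. A greedy or Hall argument from $\delta(G)\ge k+1$ fails because each $u\in U$ may have as few as $k$ neighbours other than $v$, while up to $2k-2$ vertices can already be forbidden; the ``alternating-path surgery'' at $v_0$ runs into the same count; and the appeal to Tutte--Berge on $G-V(F)-\{v\}$ is not substantiated --- you would have to show how a Tutte obstruction there produces a non-extendable $k$-matching of $G$, which is precisely the content that is missing. The missing ingredient is the elementary hereditary property of extendability: if $G$ is $k$-extendable and $F$ is an $m$-matching with $m\le k$, then $G-V(F)$ is $(k-m)$-extendable, since any $(k-m)$-matching of $G-V(F)$ together with $F$ is a $k$-matching of $G$ whose perfect extension restricts to a perfect matching of $G-V(F)$. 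Applied with $m=d(v)-k$, this makes $G-V(F)$ a $(2k-d(v))$-extendable graph in which $v$ has degree exactly $2k-d(v)$: if $d(v)=2k$ then $v$ is isolated there, contradicting the existence of a perfect matching, and if $d(v)<2k$ this contradicts the minimum-degree bound $\delta\ge(2k-d(v))+1$ from \cref{thm:ext:basic}. This observation makes your third and fourth paragraphs unnecessary and closes the proof; without it, your argument does not.
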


\section{The matching extension of special product graphs}\label{sec:bowtie}

In this section, we shall establish the $3$-extendability for some special graphs,
which will be used in handling some sporadic cases in proving \cref{thm:k'}.

Denote by $P_m$ the path having $m$ vertices.
Denote by $P_m\times P_n$, as usual, 
the Cartesian product graph of the paths~$P_m$ and~$P_n$.
We label its vertices by~$v_{i,j}$ (or by $v_{ij}$ if there is no confusion), 
from the northwest to the southeast,
where $i\in[m]$ and $j\in[n]$.
We use the notation~$R_i$ 
to denote the vertex set $\{v_{i1},\,v_{i2},\,\ldots,\,v_{in}\}$ of the $i$-th row;
and use the notation~$T_j$ 
to denote the vertex set $\{v_{1j},\,v_{2j},\,\ldots,\,v_{mj}\}$ of the $j$-th column.
We say that any edge in a row is \emph{horizontal}, 
and that any edge in a column is \emph{vertical}.
For convenience, we consider the first subscript~$i$ of the notation~$v_{ij}$ 
as modulo~$m$, and consider the second subscript~$j$ as modulo~$n$, i.e.,
\[
v_{i+km,\,j+hn}=v_{ij}
\qquad\text{for all $k,h\in\mathbb{Z}$}.
\]
It follows that
$R_{i+m}=R_i$ for all $i$,
and that
$T_{j+n}=T_j$ for all $j$.
Denote by $C_n$ the cycle having $n$ vertices.
We use the same way to label the vertices of the graphs $P_m\times C_n$ and $C_m\times C_n$.

For any positive integers $m$ and $n$,
we define the {\em bow-tie graph} $C_m\bowtie P_n$
to be the graph obtained from the graph $C_m\times P_n$
by adding the edges~$v_{i1}v_{m+2-i,\,n}$ for all $i\in[m]$.
From \cref{fig:N2},
it is easy to see that the graph $C_m\bowtie P_n$ is $N_2$-embeddable.
\begin{figure}[htbp]
\centering
\begin{tikzpicture}
\coordinate (v11) at (1,-1);
\coordinate (v12) at (3,-1); 
\coordinate (v13) at (5,-1);
\coordinate (v14) at (7,-1);
\coordinate (v15) at (9,-1);
\coordinate (v16) at (11,-1);
\coordinate (v21) at (1,-2);
\coordinate (v22) at (3,-2);
\coordinate (v23) at (5,-2);
\coordinate (v24) at (7,-2);
\coordinate (v25) at (9,-2);
\coordinate (v26) at (11,-2);
\coordinate (v31) at (1,-3);
\coordinate (v32) at (3,-3);
\coordinate (v33) at (5,-3);
\coordinate (v34) at (7,-3);
\coordinate (v35) at (9,-3);
\coordinate (v36) at (11,-3);
\coordinate (v41) at (1,-4);
\coordinate (v42) at (3,-4);
\coordinate (v43) at (5,-4);
\coordinate (v44) at (7,-4);
\coordinate (v45) at (9,-4);
\coordinate (v46) at (11,-4);
\coordinate (v51) at (1,-5);
\coordinate (v52) at (3,-5); 
\coordinate (v53) at (5,-5);
\coordinate (v54) at (7,-5);
\coordinate (v55) at (9,-5);
\coordinate (v56) at (11,-5);
\coordinate (v61) at (1,-6);
\coordinate (v62) at (3,-6); 
\coordinate (v63) at (5,-6);
\coordinate (v64) at (7,-6);
\coordinate (v65) at (9,-6);
\coordinate (v66) at (11,-6);
\coordinate (v71) at (1,-7);
\coordinate (v72) at (3,-7); 
\coordinate (v73) at (5,-7);
\coordinate (v74) at (7,-7);
\coordinate (v75) at (9,-7);
\coordinate (v76) at (11,-7);
\draw
(v11)node[above left]{$v_{11}$}
(v12)node[above left]{$v_{12}$}
(v13)node[above left]{$v_{13}$}
(v14)node[above right]{$v_{14}$}
(v15)node[above right]{$v_{15}$}
(v16)node[above right]{$v_{11}$}
(v21)node[above left]{$v_{21}$}
(v22)node[above left]{$v_{22}$}
(v23)node[above left]{$v_{23}$}
(v24)node[above right]{$v_{24}$}
(v25)node[above right]{$v_{25}$}
(v26)node[above right]{$v_{61}$}
(v31)node[above left]{$v_{31}$}
(v32)node[above left]{$v_{32}$}
(v33)node[above left]{$v_{33}$}
(v34)node[above right]{$v_{34}$}
(v35)node[above right]{$v_{35}$}
(v36)node[above right]{$v_{51}$}
(v41)node[above left]{$v_{41}$}
(v42)node[above left]{$v_{42}$}
(v43)node[above left]{$v_{43}$}
(v44)node[above right]{$v_{44}$}
(v45)node[above right]{$v_{45}$}
(v46)node[above right]{$v_{41}$}
(v51)node[above left]{$v_{51}$}
(v52)node[above left]{$v_{52}$}
(v53)node[above left]{$v_{53}$}
(v54)node[above right]{$v_{54}$}
(v55)node[above right]{$v_{55}$}
(v56)node[above right]{$v_{31}$}
(v61)node[above left]{$v_{61}$}
(v62)node[above left]{$v_{62}$}
(v63)node[above left]{$v_{63}$}
(v64)node[above right]{$v_{64}$}
(v65)node[above right]{$v_{65}$}
(v66)node[above right]{$v_{21}$}
(v71)node[above left]{$v_{11}$}
(v72)node[above left]{$v_{12}$}
(v73)node[above left]{$v_{13}$}
(v74)node[above right]{$v_{14}$}
(v75)node[above right]{$v_{15}$}
(v76)node[above right]{$v_{11}$};
\draw[thick]
(v11)--(v16)
(v21)--(v26)
(v31)--(v36)
(v41)--(v46)
(v51)--(v56)
(v61)--(v66)
(v71)--(v76)
(v11)--(v71)
(v12)--(v72)
(v13)--(v73)
(v14)--(v74)
(v15)--(v75)
(v16)--(v76);
\end{tikzpicture}
\caption{The bowtie graph $C_6\bowtie P_5$ is $N_2$-extendable.}\label{fig:N2}
\end{figure}

In the subsequent three subsections, 
we will explore the matching extension of the following Cartesian product graphs respectively:
\[
P_m\times C_n,\qquad
C_m\times C_n,\rmand
C_m\bowtie P_n.
\]
Precisely speaking, we will show that the graph $P_m\times C_n$ is $2$-extendable,
and the other two graphs are $3$-extendable, subject to some natural conditions on the integers~$m$ and~$n$.

Here we describe a combinatorial idea, which will be adopted in all the proofs uniformly.
Let~$G$ be a graph with a matching~$M$.
We say that~$G$ is \emph{separable} by a subgraph~$G'$ (with respect to~$M$), if 
\begin{itemize}
\smallskip\item
the matching~$M$ has at least one edge in the subgraph~$G'$; and
\smallskip\item
no edge of the matching~$M$ has ends in both of the subgraphs~$G'$ and $G-V(G')$.
\end{itemize}
\vskip 2pt
We call the subgraph $G'$ an~\emph{$M$-separator} of~$G$, if 
\begin{itemize}
\smallskip\item
the subgraph~$G'$ has a perfect matching containing the edge set~$M\cap E(G')$; and
\smallskip\item
the subgraph~$G-V(G')$ has a perfect matching containing the edge set~$M\cap E(G-V(G'))$.
\end{itemize}
\vskip 2pt
In particular, the subgraph $G-V(G')$ has a perfect matching even if the set $M\cap E(G-V(G'))$ is empty.
From the above definition, 
it is direct to see that the extendability of a matching~$M$ can be confirmed by finding an $M$-separator.
We call this approach the \emph{separator method}.
We will use it uniformly by choosing the separator to be a subgraph induced by consecutive rows or columns.

\subsection{The $2$-extendability of the graph $P_m\times C_n$}

This subsection is devoted to establish the following result. 
It is basic and will be used in the proof of \cref{lem:CC}, \cref{thm:CC}, and \cref{thm:k'}.

\begin{thm}\label{thm:PC}
Let $m,n\ge4$.
The Cartesian product graph $P_m\times C_n$ is $2$-extendable if and only if 
the integer~$m$ or~$n$ is even.
\end{thm}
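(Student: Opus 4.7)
The proof splits into necessity and sufficiency.

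For \emph{necessity}, if both $m$ and $n$ are odd then $|V(P_m\times C_n)|=mn$ is odd, so the graph has no perfect matching and hence is not $2$-extendable.

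For \emph{sufficiency} I would split on the parity of $n$. When $n$ is even, every edge of $C_n$ lies in one of the two alternating perfect matchings of $C_n$, so $C_n$ is $1$-extendable; applying Liu--Yu (\cref{thm:LiuYu}) with $G_1=C_n$ and $G_2=P_m$ gives that $P_m\times C_n$ is $2$-extendable. When $m$ is even and $n$ is odd, I would use the separator method introduced in the paper, with separators consisting of unions of consecutive rows. Given a $2$-matching $M=\{e_1,e_2\}$, let $a$ (respectively $b$) be the smallest (respectively largest) row index touching $M$, and take a middle block $R_{t+1}\cup\cdots\cup R_s$ containing $[a,b]$ with $t$ and $m-s$ both even and $s-t$ as small as possible; then necessarily $s-t\in\{2,4\}$, and no $M$-edge crosses the block boundaries. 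Each outer block is of the form $P_{2k}\times C_n$ and admits the vertical perfect matching $\{v_{2i-1,j}v_{2i,j}:j\in[n]\}$. It therefore suffices to find a perfect matching of the middle block that contains $M$.

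The main obstacle is that a $P_2\times C_n$ middle block is not always $2$-extendable: for instance, in $P_2\times C_5$ the matching $\{v_{14}v_{15},v_{21}v_{25}\}$ has no extension, since after its removal the vertex $v_{11}$ has only $v_{12}$ as a neighbor, which then forces $v_{13}v_{23}$ and leaves $v_{22}$ with no free neighbor. Whenever the natural $2$-row middle block is of this pathological type, I would enlarge it by two more rows, reducing the problem to the base case: $P_4\times C_n$ is $2$-extendable for every odd $n\geq 5$. I would prove this base case by a direct case analysis on the orientations (horizontal or vertical) and positions of $e_1$ and $e_2$. In each configuration one pairs most columns vertically across the row-pairs $(R_1,R_2)$ and $(R_3,R_4)$, and performs local horizontal substitutions near the endpoints of the $M$-edges to cure parity collisions; the hypothesis $n\geq 4$ combined with $n$ odd ensures that enough free columns remain to carry these substitutions out in every case.
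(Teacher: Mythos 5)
Your necessity argument and your treatment of the case where $n$ is even coincide with the paper's. For the case where $m$ is even and $n$ is odd, however, your row-block scheme has a genuine gap: the claim that the minimal block $R_{t+1}\cup\cdots\cup R_s$ containing all rows touched by $M$, with $t$ and $m-s$ even, necessarily satisfies $s-t\in\{2,4\}$ is false. The two edges of $M$ may touch rows that are far apart, or close together but with unfavourable parity. For instance, with $m=6$, $e_1=v_{21}v_{31}$ and $e_2=v_{42}v_{52}$, the rows touched are $2$ through $5$, so the smallest admissible block is the whole graph and $s-t=6$; taking $e_1$ in row $1$ and $e_2$ in row $m$ gives $s-t=m$ for every even $m$. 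Some such configurations can be rescued by using two disjoint even-boundary blocks, one per edge, since each block only needs to be $1$-extendable and every $P_{2r}\times C_n$ is $1$-extendable by \cref{thm:LiuYu}; but the six-row example above shows that a vertical edge can straddle every candidate cut, so not every configuration reduces to your $P_4\times C_n$ base case, and you supply no induction or further argument for larger middle blocks. The base case itself ($P_4\times C_n$ with $n$ odd and an arbitrary $2$-matching, including vertical edges) is also only sketched, not proved.

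The paper sidesteps exactly this difficulty by cutting along columns first: a vertical edge lies inside a single column, and a single column ($\cong P_m$ with $m$ even) or a pair of adjacent columns ($\cong P_m\times P_2$, which is $1$-extendable) absorbs it, so column separators dispose of every configuration except the one in which both edges are horizontal and confined to three consecutive columns. Only for that residual case does the paper induct on the number of rows, with $P_4\times C_n$ and purely horizontal edges as the base case, where the induction step is clean because a horizontal edge never crosses a row boundary. To repair your argument you would need either to prove $2$-extendability of $P_{2r}\times C_n$ for all $r\ge2$ by induction on $r$ (handling the vertical edges that straddle the cut between the first two rows and the rest), or to add a separate column-based treatment of vertical edges along the lines of the paper.
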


\begin{proof}
The necessity is clear from the definition.
When~$n$ is even, the cycle $C_n$ is $1$-extendable. 
Thus the sufficiency is true from \cref{thm:LiuYu}.
It suffices to show the sufficiency for odd~$n$.

Let $n\ge 5$ be an odd integer. Then the integer~$m$ is even.
Let $G$ be the graph $P_m\times C_n$, with a $2$-matching~$M=\{e_1,e_2\}$.
Note that every column of the graph~$G$ is isomorphic to the path~$P_m$, which has a perfect matching.
We will adopt the separator method by finding some columns, 
whose induced subgraph has a perfect matching containing the matching~$M$.
We have $3$ cases to treat.

\medskip
\noindent{\it Case 1.}
There are two disjoint pairs of adjacent columns, 
such that one pair contains the vertex set~$V(e_1)$, and the other pair contains the vertex set~$V(e_2)$.
Since the subgraph induced by any two adjacent columns is $1$-extendable, the four columns form an $M$-separator.

\medskip
\noindent{\it Case 2.}
The vertex set~$V(M)$ is contained in two adjacent columns, and Case 1 does not occur.
Then the two adjacent columns form an $M$-separator.
In fact, when both the edges~$e_1$ and~$e_2$ are vertical and in distinct columns, 
the previous possibility happens, a contradiction.
In other words, either the vertex set~$V(M)$ is contained in one column, 
or one of the edges in the matching~$M$ is horizontal.

\medskip
\noindent{\it Case 3.}
Otherwise, the vertex set $V(M)$ intersects with exactly three consecutive columns,
and both the edges $e_1$ and $e_2$ are horizontal.

We proceed by induction on~$m$. For $m=4$, we have $2$ subcases to treat.

\medskip
\noindent{\it Case 1.}
Assume that the edges in the matching~$M$ lie in Row~1 and Row~2,
or in Row~1 and Row~3. Since every row is isomorphic to a cycle, 
we can suppose without loss of generality that 
\[
e_1=v_{11}v_{12}
\rmand
e_2\in\{v_{22}v_{23},\,v_{32}v_{33}\}.
\]
In this case, the subgraph $G[T_1,T_2,T_3,T_4]$ has the perfect matching
\[
\{v_{11}v_{12},\,
v_{13}v_{14},\,
v_{22}v_{23},\,
v_{32}v_{33},\,
v_{41}v_{42},\,
v_{43}v_{44},\,
v_{21}v_{31},\,
v_{24}v_{34}\},
\]
which contains the matching~$M$; see \cref{fig:P4Cn:1}.
\begin{figure}[htbp]
\centering
\begin{tikzpicture}
\draw[ultra thick]
(1,-1)--(2,-1)
(2,-2)--(3,-2)
(2,-3)--(3,-3);
\draw[ultra thin]
(3,-1)--(4,-1)
(1,-4)--(2,-4)
(3,-4)--(4,-4)
(1,-2)--(1,-3)
(4,-2)--(4,-3);
\draw
(1,-1)node[above=12pt]{$1$}
(2,-1)node[above=12pt]{$2$}
(3,-1)node[above=12pt]{$3$}
(4,-1)node[above=12pt]{$4$}
(1,-1)node[left=16pt]{$1$}
(1,-2)node[left=16pt]{$2$}
(1,-3)node[left=16pt]{$3$}
(1,-4)node[left=16pt]{$4$};
\end{tikzpicture}
\caption{The extension of the matching $M$ for Case 1.}\label{fig:P4Cn:1}
\end{figure}
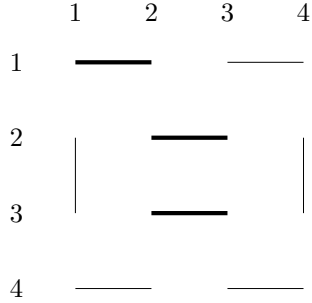

\medskip
\noindent{\it Case 2.}
Otherwise, the edges in the matching $M$ lie in Row~1 and Row~4,
or in Row~2 and Row~3. We can suppose that 
\[
M=\{v_{11}v_{12},\,v_{42}v_{43}\}
\qquad\text{or}\qquad
M=\{v_{21}v_{22},\,v_{32}v_{33}\}.
\]
In this case, the subgraph $G[T_1,T_2,T_3]$ has the perfect matching 
\[
\{v_{11}v_{12},\,
v_{21}v_{v22},\,
v_{13}v_{23},\,
v_{31}v_{41},\,
v_{32}v_{33},\,
v_{42}v_{43}\},
\]
which contains the matching~$M$; see \cref{fig:P4Cn:2}. 
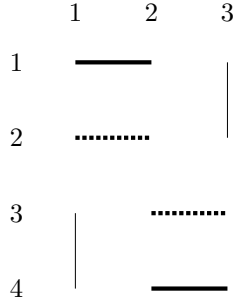
\begin{figure}[htbp]
\centering
\begin{tikzpicture}
\draw[ultra thick]
(1,-1)--(2,-1)
(2,-4)--(3,-4);
\draw[ultra thick, densely dotted]
(1,-2)--(2,-2)
(2,-3)--(3,-3);
\draw[very thin]
(3,-1)--(3,-2)
(1,-3)--(1,-4);
\draw
(1,-1)node[above=12pt]{$1$}
(2,-1)node[above=12pt]{$2$}
(3,-1)node[above=12pt]{$3$}
(1,-1)node[left=16pt]{$1$}
(1,-2)node[left=16pt]{$2$}
(1,-3)node[left=16pt]{$3$}
(1,-4)node[left=16pt]{$4$};
\end{tikzpicture}
\caption{The extension of the matching $M$ for Case 2.}\label{fig:P4Cn:2}
\end{figure}
This completes the proof for $m=4$.

Now we can suppose that $m\ge 6$,
and that the graph $P_{m-2}\times C_n$ is $2$-extendable.
Note that the subgraph induced by any two adjacent rows has a perfect matching. 
By induction, we are done if the matching~$M$ shares no vertices with the first two rows.
For the same reason, we are done if the matching~$M$ shares no vertices with the last two rows.
Since $m\ge 6$, we can suppose that the horizontal edge~$e_1$ is in the first two rows,
and that the horizontal edge~$e_2$ is in the last two rows.
On one hand, the subgraph~$G[R_1,R_2]$ is isomorphic to the graph $P_2\times C_n$, which is $1$-extendable.
On the other hand, the subgraph $G-R_1-R_2$ is isomorphic to the graph $P_{m-2}\times C_n$,
which is $2$-extendable by induction hypothesis.
Hence, the subgraph~$G[R_1,R_2]$ is an $M$-separator.
This completes the proof.
\end{proof}

\subsection{The $3$-extendability of the graph $C_m\times C_n$}

In this subsection we study the extendability of the graph $C_m\times C_n$,
which will be used in the proof of \cref{thm:k'}.

A necessary condition for the graph $C_m\times C_n$ to have a perfect matching is that
one of the integers~$m$ and~$n$ is even.
By symmetry, we can suppose that the integer~$m$ is even.
In virtue of \cref{thm:GP92}, the graph~$C_m\times C_n$ is $3$-extendable if the integer~$n$ is also even.
Therefore, we can suppose that~$n$ is odd. The following lemma will be used for several times.

\begin{lem}\label{lem:CC}
Let $m\ge6$ be an even integer, and let $n\ge5$ be an odd integer.
Let $G$ be the graph $C_m\times C_n$, with a $3$-matching~$M$.
Then the matching~$M$ is extendable if~$G$ is separable by

\smallskip\noindent(i) 
a subgraph $G[R_i,\,R_{i+1}]$ for some $i\in[m]$, 
which contains exactly one edge of the matching~$M$; or

\smallskip\noindent(ii) 
a subgraph $G[T_j,\,T_{j+1}]$ for some $j\in[n]$, 
which contains one or two edges of the matching~$M$.
\end{lem}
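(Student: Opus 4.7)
The plan is to apply the separator method directly. In each case I would verify that the prescribed subgraph~$G'$ is an $M$-separator, that is, $G'$ admits a perfect matching containing $M\cap E(G')$ and $G-V(G')$ admits a perfect matching containing $M\cap E(G-V(G'))$; the union of these two matchings will then be a perfect matching of~$G$ containing~$M$. Thus the whole proof reduces to identifying the two candidate subgraphs as known Cartesian product graphs and invoking the correct extendability result at the correct level.

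For part~(i), I would identify $G'=G[R_i,\,R_{i+1}]$ with $P_2\times C_n$. Since $P_2$ is trivially $0$-extendable and $C_n$ is connected, \cref{thm:LiuYu} shows that $G'$ is $1$-extendable, so the single edge of~$M$ lying in~$G'$ extends to a perfect matching of~$G'$. The complement $G-V(G')$ is obtained by deleting two consecutive rows from $C_m\times C_n$, which breaks the $C_m$ factor into a path and yields $P_{m-2}\times C_n$. Since $m\ge 6$ is even we have $m-2\ge 4$ even, and $n\ge 5$, so \cref{thm:PC} gives that this complement is $2$-extendable; the remaining two edges of~$M$ therefore extend to a perfect matching of $G-V(G')$.

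For part~(ii), I would identify $G'=G[T_j,\,T_{j+1}]$ with $P_2\times C_m$. Since~$m$ is even, $C_m$ is $1$-extendable, so \cref{thm:GP92} combined with the $0$-extendability of~$P_2$ shows that~$G'$ is $2$-extendable; hence whether~$G'$ contains one or two edges of~$M$, these extend to a perfect matching of~$G'$. The complement is $C_m\times P_{n-2}$, and since $C_m$ is $1$-extendable while $P_{n-2}$ is connected (as $n-2\ge 3$), \cref{thm:LiuYu} again yields $2$-extendability of the complement, which absorbs the remaining two or one edges of~$M$.

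The argument is largely mechanical once the separator method is in place, and I do not expect any genuine obstacle. The only point requiring care is the bookkeeping of extendability numbers: $1$-extendability of~$G'$ is exactly what is needed in part~(i), while part~(ii) genuinely requires $2$-extendability of~$G'$ in order to cover the two-edge subcase, which is why \cref{thm:GP92} rather than \cref{thm:LiuYu} is invoked there. On the complement side, we always have the $2$-extendability we need because either $m-2\ge 4$ is even so that \cref{thm:PC} applies, or the cycle factor $C_m$ is itself $1$-extendable so that \cref{thm:LiuYu} suffices.
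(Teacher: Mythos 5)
Your proposal is correct and follows essentially the same route as the paper: identify $G[R_i,R_{i+1}]\cong P_2\times C_n$ and its complement $P_{m-2}\times C_n$ in case (i), and $G[T_j,T_{j+1}]\cong C_m\times P_2$ and its complement $C_m\times P_{n-2}$ in case (ii), then invoke the product-extendability results and \cref{thm:PC}. Your choice of \cref{thm:LiuYu} for the complement $C_m\times P_{n-2}$ in case (ii) is in fact the more careful citation, since $P_{n-2}$ has odd order when $n$ is odd and hence is not $0$-extendable, so \cref{thm:GP92} (which the paper cites there) does not literally apply.
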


\begin{proof}
We prove the validities of Condition (i) and Condition (ii) individually.
Let $i\in[m]$ and $j\in[n]$.

\smallskip\noindent(i)
The subgraph $G[R_i,\,R_{i+1}]$ is isomorphic to the graph $P_2\times C_n$,
which is $1$-extendable.
The subgraph $G-R_i-R_{i+1}$ is isomorphic to the graph $P_{m-2}\times C_n$.
Since $m-2\ge 4$, then the subgraph $G-R_i-R_{i+1}$ is $2$-extendable by \cref{thm:PC}.
Hence, the matching~$M$ is extendable in the graph~$G$.

\smallskip\noindent(ii)
The subgraph $G[T_j,\,T_{j+1}]$ is isomorphic to the graph $C_m\times P_2$, 
and the subgraph $G-T_j-T_{j+1}$ is isomorphic to the graph $C_m\times P_{n-2}$.
Both of them are $2$-extendable by \cref{thm:GP92}.
Hence, the matching $M$ is extendable in the graph~$G$.
\end{proof}

Here the main result of this subsection.

\begin{thm}\label{thm:CC}
Let $m\ge6$ be an even integer, and let $n\ge 5$ be an odd integer.
Then the Cartesian product graph $C_m\times C_n$ is $3$-extendable.
\end{thm}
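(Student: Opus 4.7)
The plan is to apply the separator method established in \cref{lem:CC}. Given a 3-matching $M=\{e_1,e_2,e_3\}$ of $G=C_m\times C_n$, each edge $e_k$ has a row-interval $I_k\subseteq[m]$ of size $1$ (if horizontal) or $2$ (if vertical), and a column-interval $J_k\subseteq[n]$ analogously. We seek an adjacent row-pair $\{R_i,R_{i+1}\}$ or column-pair $\{T_j,T_{j+1}\}$ meeting one of the hypotheses of \cref{lem:CC}.

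First I would examine the column profile. Since $n\ge 5$ and the column-intervals $J_k$ together cover at most six columns of the cycle, in most configurations there exists an adjacent column-pair $\{T_j,T_{j+1}\}$ that contains one or two of the $J_k$'s while no horizontal edge of $M$ has endpoints on both sides of the boundary; such a pair gives the desired separator via \cref{lem:CC}(ii). When this column analysis does not locate a suitable slab, I would symmetrically analyze the row profile: since $m\ge 6$, the analogous argument usually locates an adjacent row-pair containing exactly one $I_k$ and disjoint from the other two, so that \cref{lem:CC}(i) applies. The case split can be organized by the orientation profile of $M$ (the number of horizontal versus vertical edges), and within each case the separator is read off from the relative cyclic positions of the $I_k$'s and $J_k$'s.

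The main obstacle I foresee is the \emph{concentrated} case, in which both the row and the column strategies fail and all three edges are packed into a single slab. The representative hard sub-case is when all three edges of $M$ are vertical and share one column $T_c$: the slab $\{T_c,T_{c+1}\}$ contains all three edges of $M$, so \cref{lem:CC}(ii) does not apply, and when the $I_k$'s have mixed parities in $C_m$ the column $T_c$ alone admits no perfect matching extending $M$. I would handle this by taking $G[T_c,T_{c+1}]\cong C_m\times P_2$ directly as a separator: construct a perfect matching of the prism extending $M$ by routing the two uncovered vertices of $T_c$ horizontally into $T_{c+1}$ and completing the match inside $T_{c+1}$, then observe that the complementary strip $G-T_c-T_{c+1}\cong C_m\times P_{n-2}$ has a perfect matching because $m$ is even. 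The remaining concentrated sub-cases (all horizontal edges in one or two rows, mixed orientations packed in a $2\times 3$ block, and so on) are dispatched by analogous direct constructions, appealing to \cref{thm:PC} on the complementary strip whenever its dimensions permit.
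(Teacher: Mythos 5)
Your proposal follows essentially the same route as the paper: a case analysis on the orientation profile of the $3$-matching, locating an adjacent row- or column-slab satisfying \cref{lem:CC}, with direct constructions (backed by \cref{thm:GP92} and \cref{thm:PC} on the complement) for the concentrated configurations such as three vertical edges in one column. One small imprecision: in that concentrated case the column $T_c$ has $m-6$ (not necessarily two) uncovered vertices, and what must be routed into $T_{c+1}$ is one endpoint from each of the (zero or two) odd-order components of $T_c-V(M)$; a parity count shows these two endpoints always lie in opposite colour classes of $C_m$, so the construction does go through.
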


\begin{proof}
Let $m\ge6$ be an even integer, and let $n\ge 5$ be an odd integer.
Let $G$ be the graph $C_m\times C_n$, with a $3$-matching $M=\{e_1,e_2,e_3\}$.
In order to show that the matching~$M$ is extendable, it suffices to find 
\begin{itemize}
\smallskip
\item
a row index $i^*$ such that the subgraph $G[R_{i^*},\,R_{i^*+1}]$ is an $M$-separator, or
\smallskip
\item
a column index $j^*$ such that the subgraph $G[T_{j^*},\,T_{j^*+1}]$ is an $M$-separator.
\end{itemize}
Let~$h$ be the number of horizontal edges in the matching~$M$.
Then we have $h\in\{0,1,2,3\}$. We treat these $4$ cases individually.

\medskip
\noindent{\it Case 1.} $h=0$, that is, all edges in the matching~$M$ are vertical.
Note that each column of the graph~$G$ is isomorphic to the cycle~$C_m$, which is $1$-extendable.
If the $3$ edges in the matching~$M$ are in distinct columns, 
we are done immediately.
If they are in the same column, then that column together with one of its adjacent columns form an $M$-separator.
Otherwise, we can suppose that Column~$j$ contains the edges~$e_1$ and~$e_2$, but not the edge~$e_3$.
By virtue of \cref{lem:CC},
we can take $j^*=j$ if the edge~$e_3$ is not in Column $(j+1)$;
and take $j^*=j+1$ otherwise.

\medskip
\noindent{\it Case 2.} $h=1$.
In this case, we can suppose that the edge $e_1$ is horizontal,
and that the edges~$e_2$ and $e_3$ are vertical. 
Since each row is isomorphic to a cycle, 
we can further suppose that the edge~$e_1=v_{i1}v_{i2}$ intersects with the first two columns. 

If at most one of the edges~$e_2$ and~$e_3$ is in the first two columns,
then we can take $j^*=1$ by \cref{lem:CC}.
Otherwise, both of them are in the first two columns.
If the vertex set~$V(M)$ misses Row~$(i+1)$,
then we can take $i^*=i$ by \cref{lem:CC}.
For the same reason, we can take $i^*=i-1$ if the vertex set~$V(M)$ misses Row~$(i-1)$.
Otherwise, one of the vertical edges~$e_2$ and~$e_3$ is immediately above the horizontal edge~$e_1$,
and the other is immediately below the edge~$e_1$. In this case, we can take $i^*=i+1$ by \cref{lem:CC}.

\smallskip
\noindent{\it Case 3.} $h=2$.
We can suppose that the edges $e_1$ and $e_2$ are horizontal,
and that the edge $e_3$ is vertical. 
Furthermore, we can suppose that the horizontal edge~$e_1$ intersects with the first two columns, 
the horizontal edge~$e_2$ intersects with Column~$j$ and Column $(j+1)$,
and that the vertical edge~$e_3=v_{pq}v_{p+1,\,q}$, where $p\in[m]$ and $q\in[n]$.

If $j=1$, to wit, the horizontal edge~$e_2$ lies below the edge~$e_1$. In this case, we can take $j^*=1$.
In fact, since the edge~$e_3$ is vertical, the graph $G$ is separable by the subgraph $G[T_1,T_2]$. 
On the other hand, it is easy to see that both of the subgraphs $G[T_1,T_2]-V(e_1\cup e_2)$
and $G-T_1-T_2$ are $1$-extendable.

If $j\ge 3$, then the graph $G$ is separable by the first two columns with respect to the matching~$M$.
In this case, we can also take $j^*=1$, by using \cref{lem:CC}.

Otherwise, we have $j=2$, that is, 
the vertex set of the horizontal edges~$e_1$ and~$e_2$ intersects with exactly the first three columns.
\begin{itemize}
\item
If $q\in[3]$, i.e., the edge~$e_3$ is also contained in the subgraph $G[T_1,T_2,T_3]$,
then we can take $i^*=p$. In fact, the subgraph $G'=G[R_p,\,R_{p+1}]$ contains the vertical edge~$e_3$,
and possibly one of the horizontal edges~$e_1$ and~$e_2$.
In any case, the matching $M\cap E(G')$ is extendable in the subgraph~$G'$.
\item
If $q\ge 4$, i.e., the vertical edge~$e_3$ has empty intersection with the first three columns.
In this case, the subgraph~$G[T_q]$ is an $M$-separator.
In fact, the subgraph~$G[T_q]$, which contains the edge~$e_3$, is isomorphic to the cycle~$C_m$,
which is $1$-extendable. On the other hand, 
the subgraph $G-T_q$ is isomorphic to the graph $C_m\times P_{n-1}$, i.e.,
the graph $P_{n-1}\times C_m$. Since $n-1\ge4$,
it is $2$-extendable by \cref{thm:PC}.
\end{itemize}

\smallskip
\noindent{\it Case 4.} $h=3$.
If all edges in $M$ lie in the same row,
by \cref{lem:CC}, we can take $j^*=j$ for any edge $v_{ij}v_{i,j+1}\in M$.
Otherwise, there exists a row $R_i$
containing exactly one edge in $M$ such that one of its adjacent rows
has no edges in $M$. In other words, 
either $R_{i-1}\cap V(M)=\emptyset$ or $R_{i+1}\cap V(M)=\emptyset$.
By \cref{lem:CC}, we can take $i^*=i-1$ in the former case, and $i^*=i$ in the latter case.

This completes the proof.
\end{proof}

We remark that the graph $C_4\times C_n$ is not $3$-extendable when $n$ is odd.
This can be seen from the fact that the particular $3$-matching 
\[
M=\{v_{11}v_{12},\,v_{22}v_{32},\,v_{31}v_{41}\}
\]
is not extendable; see \cref{fig:C4Cn}. Define
\[
U=\{v_{i,2j}\,\colon\ i\in\{1,3\},\ 2\le j\le (n-1)/2\}
\cup\{v_{i,\,2j+1}\,\colon\ i\in\{2,4\},\ 1\le j\le (n-1)/2\}.
\]
We have $|U|=2n-4$. Note that the subgraph $G-V(M)-U$ consists of $2n-2$ isolated vertices.
By Tutte's theorem (see \cref{thm:Tutte}), the subgraph $G-V(M)$ has no perfect matchings.
\begin{figure}[htbp]
\centering
\begin{tikzpicture}
\draw[very thin, densely dotted] 
(1,-1) grid (8,-4)
(10,-1) grid (13,-4);
\draw 
(1,-1)node[left=6pt]{$1$}
(1,-2)node[left=6pt]{$2$}
(1,-3)node[left=6pt]{$3$}
(1,-4)node[left=6pt]{$4$}
(1,-1)node[above=6pt]{$1$}
(2,-1)node[above=6pt]{$2$}
(3,-1)node[above=6pt]{$3$}
(4,-1)node[above=6pt]{$4$}
(5,-1)node[above=6pt]{$5$}
(6,-1)node[above=6pt]{$6$}
(7,-1)node[above=6pt]{$7$}
(8,-1)node[above=6pt]{$8$}
(10,-1)node[above=6pt]{$n-3$}
(11,-1)node[above=6pt]{$n-2$}
(12,-1)node[above=6pt]{$n-1$}
(13,-1)node[above=6pt]{$n$}
(9,-2.5)node[]{$\ldots$};
\draw[ultra thick]
(1,-1)--(2,-1)
(2,-2)--(2,-3)
(1,-3)--(1,-4);
\foreach \x in {4,6,8,10,12} \foreach \y in {-1,-3} \fill[black,opacity=1] (\x,\y) circle (2pt);
\foreach \x in {3,5,7,11,13} \foreach \y in {-2,-4} \fill[black,opacity=1] (\x,\y) circle (2pt);
\end{tikzpicture}
\caption{The graph $C_4\times C_n$ is not $3$-extendable when $n$ is odd.}\label{fig:C4Cn}
\end{figure}
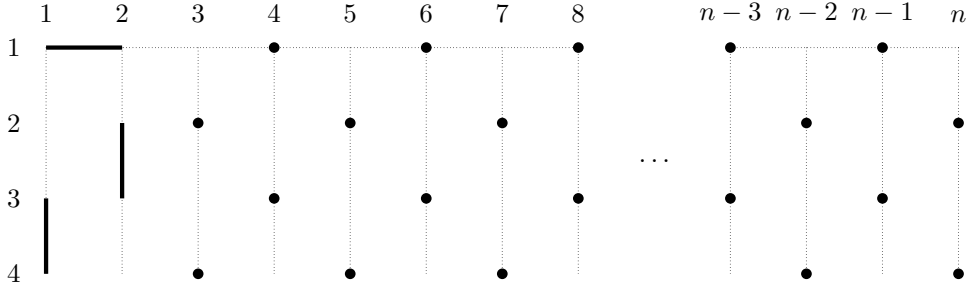

\subsection{The $3$-extendability of the graph $C_6\bowtie P_n$}

When the integer $n$ is even, the spanning subgraph $C_6\times P_n$ of the bow-tie graph $C_6\bowtie P_n$ 
is $3$-extendable by \cref{thm:GP92}. 
In this subsection, we will show the $3$-extendability of the graph $C_6\bowtie P_n$ for odd integers $n\ge5$.

It is easy to see that the graph $C_6\bowtie P_n$ can be drawn as in \cref{fig:bowtie}, which is symmetric up and down. 
For convenience, we rename the vertices in the following way:
\begin{align*}
&v_{1i}=h_i,\qquad v_{6i}=q_i,\qquad v_{2i}=q_{i+n},\\
&v_{4i}=h_i',\qquad v_{5i}=q_i',\qquad v_{3i}=q_{i+n}',
\end{align*}
and use capital letters to denote vertex subsets as follows:
\[
\begin{aligned}
H&=\{\,h_i\,\colon\,i\in[n]\,\},&\qquad 
Q&=\{\,q_j\,\colon\,j\in[2n]\,\},&\qquad 
J&=H\cup Q,\\
H'&=\{\,h_i'\,\colon\,i\in[n]\,\},&\qquad 
Q'&=\{\,q_j'\,\colon\,j\in[2n]\,\},&\qquad 
J'&=H'\cup Q'.
\end{aligned}
\]

\begin{figure}[htbp]
\centering
\begin{tikzpicture}
\coordinate (v11) at (3,2);
\coordinate (v12) at (4,2);
\coordinate (v15) at (7,2);
\coordinate (v21) at (0,1);
\coordinate (v22) at (1,1);
\coordinate (v25) at (4,1);
\coordinate (v26) at (6,1);
\coordinate (v27) at (7,1);
\coordinate (v28) at (8,1);
\coordinate (v20) at (10,1);
\coordinate (v31) at (0,-1);
\coordinate (v32) at (1,-1);
\coordinate (v35) at (4,-1);
\coordinate (v36) at (6,-1);
\coordinate (v37) at (7,-1);
\coordinate (v30) at (10,-1);
\coordinate (v41) at (3,-2);
\coordinate (v42) at (4,-2);
\coordinate (v45) at (7,-2);
\coordinate (H) at (8,2);
\coordinate (H') at (8,-2);
\draw
(H)node[above]{$H$}
(H')node[below]{$H'$}
(v11)node[above]{$v_{11}$}
(v12)node[above]{$v_{12}$}
(v15)node[above]{$v_{1n}$}
(v41)node[below]{$v_{41}$}
(v42)node[below]{$v_{42}$}
(v45)node[below]{$v_{4n}$}
(v11)node[above=10]{$h_1$}
(v12)node[above=10]{$h_2$}
(v15)node[above=10]{$h_0$}
(v41)node[below=10]{$h_1'$}
(v42)node[below=10]{$h_2'$}
(v45)node[below=10]{$h_0'$}
(v21)node[below right]{$v_{61}$}
(v22)node[below right]{$v_{62}$}
(v25)node[below right]{$v_{6n}$}
(v26)node[below left]{$v_{21}$}
(v27)node[below left]{$v_{22}$}
(v20)node[below left]{$v_{2n}$}
(v31)node[above right]{$v_{51}$}
(v32)node[above right]{$v_{52}$}
(v35)node[above right]{$v_{5n}$}
(v36)node[above left]{$v_{31}$}
(v37)node[above left]{$v_{32}$}
(v30)node[above left]{$v_{3n}$}
(v21)node[below=16, right=1]{$q_1$}
(v22)node[below=16, right=1]{$q_2$}
(v25)node[below=16, right=1]{$q_n$}
(v26)node[below=16, left=1]{$q_{n+1}$}
(v27)node[below=16, left=1]{$q_{n+2}$}
(v20)node[below=16, left=1]{$q_0$}
(v31)node[above=16, right=1]{$q_1'$}
(v32)node[above=16, right=1]{$q_2'$}
(v35)node[above=16, right=1]{$q_n'$}
(v36)node[above=16, left=1]{$q_{n+1}'$}
(v37)node[above=16, left=1]{$q_{n+2}'$}
(v30)node[above=16, left=1]{$q_0'$}
(v20)node[right=6]{$Q$}
(v30)node[right=6]{$Q'$}
(v25)node[left=15, above=3]{$\cdots$}
(v27)node[right=15, above=3]{$\cdots$}
(v35)node[left=15, below=3]{$\cdots$}
(v37)node[right=15, below=3]{$\cdots$}
(2.5,1)node[below=25]{$\cdots$}
(8.5,1)node[below=25]{$\cdots$};
\draw
(v11)--(v15) (v21)--(v20)
(v11)--(v21) (v11)--(v26)
(v12)--(v22) (v12)--(v27)
(v15)--(v25) (v15)--(v20)
(v11) .. controls (-0.5,3.5) and (10.5, 3.5) .. (v15) 
(v21) .. controls (1,4.5) and (9,4.5) .. (v20);
\draw
(v41)--(v45) (v31)--(v30)
(v41)--(v31) (v41)--(v36)
(v42)--(v32) (v42)--(v37)
(v45)--(v35) (v45)--(v30)
(v41) .. controls (-0.5,-3.5) and (10.5,-3.5) .. (v45)
(v31) .. controls (1,-4.5) and (9,-4.5) .. (v30);
\draw
(v21)--(v31)
(v22)--(v32)
(v25)--(v35)
(v26)--(v36)
(v27)--(v37)
(v20)--(v30);
\foreach \point in 
{v21,v22,
v25,v26,v27,
v20,
v31,v32,
v35,v36,v37,
v30,
v11,v12,
v15,
v41,v42,
v45}
\fill[black,opacity=1] (\point) circle (2pt);
\end{tikzpicture}
\vskip -30pt
\caption{The graph $C_6\bowtie P_n$.}\label{fig:bowtie}
\end{figure}

Let us keep in mind that the subscript $i$ in the symbols~$h_i$ are considered modulo~$n$,
and the subscript~$i$ in the symbols~$q_i$ are modulo~$2n$, namely,
\[
h_{i+n}=h_i
\rmand
q_{j+2n}=q_j
\qquad\text{for all integers $i$ and $j$.}
\]

\begin{thm}\label{thm:bowtie}
Let $n\ge 5$ be an odd integer. Then the bow-tie graph $C_6\bowtie P_n$ is 3-extendable.
\end{thm}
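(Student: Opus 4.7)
The plan is to apply the separator method uniformly, treating the bow-tie edges as creating a cyclic adjacency between the columns $T_n$ and $T_1$. The central structural observation is that for any cyclic arc $A$ of consecutive columns---possibly wrapping through the $T_n$-$T_1$ bow-tie connection---the induced subgraph $G[A]$ is isomorphic to $C_6 \times P_{|A|}$. For non-wrapping arcs this is immediate from the structure of $C_6 \times P_n$. For wrapping arcs $A=\{T_{j_1},\ldots,T_n,T_1,\ldots,T_{j_2}\}$, the bow-tie twist $v_{i,1}v_{8-i,n}$ can be undone by applying the involution $\sigma\colon v_{i,j}\mapsto v_{8-i,j}$ (a reflection of $C_6$ fixing rows $1$ and $4$) to the columns $T_1,\ldots,T_{j_2}$; under $\sigma$ the bow-tie edges become standard row edges. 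Combined with \cref{thm:LiuYu,thm:GP92}, this yields that $G[A]$ is $3$-extendable when $|A|$ is even with $|A|\ge 2$, is $2$-extendable when $|A|$ is odd with $|A|\ge 3$, and is $1$-extendable when $|A|=1$ (in which case $G[A]\cong C_6$).

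Given a $3$-matching $M$ of $G$, we classify each edge as vertical (within a single column), horizontal (joining two cyclically adjacent columns via a row edge), or bow-tie; the latter two types we call \emph{transverse}. Each transverse edge occupies exactly one of the $n$ column-boundaries in the cyclic arrangement, so $M$ occupies at most three boundaries and leaves at least $n-3\ge 2$ free. Cutting the cycle at any two free boundaries partitions the column set into cyclic arcs $A$ and $B$ with no edge of $M$ crossing a cut; since $n$ is odd, exactly one of $|A|$ and $|B|$ is even. Writing $r=|M\cap E(G[A])|$, the extension problem reduces to choosing arcs for which $G[A]$ is $r$-extendable and $G[B]$ is $(3-r)$-extendable.

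The main case is that some column $T_k$ has vertex set disjoint from $V(M)$; this holds automatically when $n\ge 7$, since $M$ touches at most $6<n$ columns. We take $B=\{T_k\}$ and $A$ to be the complementary cyclic arc of length $n-1$ (which is even and $\ge 2$). Both boundaries adjacent to $T_k$ are free, and all three edges of $M$ lie in $G[A]\cong C_6\times P_{n-1}$, which is $3$-extendable; $G[B]\cong C_6$ then contributes a perfect matching, completing the extension. The remaining case is $n=5$ with $V(M)$ meeting every column; a simple type analysis forces $M$ to consist of either three transverse edges whose occupied boundaries have the cyclic spacing pattern $(1,2,2)$, or two transverse edges at non-adjacent boundaries together with one vertical edge in the uncovered column. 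In each sub-configuration an explicit pair of free boundaries yields cyclic arcs of sizes $(2,3)$ or $(1,4)$ with edge distribution $(r_A,r_B)=(1,2)$ up to swapping $A$ and $B$, so that the established extendabilities suffice.

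The hardest part will be verifying cleanly that the involution $\sigma$ produces a genuine isomorphism $G[A]\cong C_6\times P_{|A|}$ for wrap-around arcs, together with the exhaustive case analysis for $n=5$ when $V(M)$ meets every column.
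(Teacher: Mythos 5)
There is a fatal gap at the heart of your main case: the claim that $G[A]\cong C_6\times P_{|A|}$ is $3$-extendable when $|A|$ is even. No graph of the form $C_6\times P_k$ ($k\ge 2$) is $3$-extendable, because the vertices in its two end columns have degree $3$, while \cref{thm:ext:basic} forces $\delta\ge 4$ for a $3$-extendable graph. Concretely, in $C_6\times P_{k}$ the $3$-matching $\{v_{2,1}v_{3,1},\,v_{5,1}v_{6,1},\,v_{1,2}v_{1,3}\}$ covers all three neighbours of $v_{1,1}$ and misses $v_{1,1}$, so it does not extend. Neither \cref{thm:LiuYu} nor \cref{thm:GP92} yields more than $2$-extendability here, since $P_k$ is only $0$-extendable (it is not $1$-extendable for $k\ge 3$, and for $k=2$ the order requirement $|G|\ge 2k+2$ fails). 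Consequently your main case --- put all three edges of $M$ into the arc $A$ of $n-1$ columns and invoke $3$-extendability of $G[A]$ --- collapses, and this is the case that was supposed to dispose of every $n\ge 7$ and most of $n=5$. Within your framework the only edge distributions $(r_A,r_B)$ that the arc subgraphs can actually absorb are $(1,2)$ and $(2,1)$; the distributions $(0,3)$ and $(3,0)$ are exactly the ones your cited tools cannot handle, and some matchings (e.g.\ three vertical edges forming a perfect matching of a single column, or a matching clustered around one boundary) force every choice of cut into a $(3,0)$ split. Repairing this would require a genuinely new argument for the concentrated configurations, not just a more careful choice of cut. Your structural observation that every proper cyclic arc induces a copy of $C_6\times P_{|A|}$ (undoing the bow-tie twist by the reflection $\sigma$) is correct and pleasant, but it cannot carry the whole proof.

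For what it is worth, the paper avoids this obstacle by cutting the graph the other way: it splits $C_6\bowtie P_n$ into two symmetric halves $G[J]$ and $G[J']$ (each consisting of one ``$H$-row'' and a $2n$-cycle $Q$), classifies the edges of $M_0$ as faithful, co-faithful, or unfaithful, and proves bespoke lemmas (\cref{lem1,lem2,lem3,lem4}) about matchings of $G[J]$ that cover $H$ and prescribed vertices of $Q$, finally gluing the two halves along the rungs $q_jq_j'$. That decomposition is adapted to the fact that the bottleneck is the degree-$4$ structure across the middle of the bow-tie, which a column-arc decomposition cannot see.
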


\begin{proof}
Let $G$ be the bow-tie graph $C_6\bowtie P_n$.

Let $M_0$ be a $3$-matching of the graph~$G$.
We call an edge of~$M_0$ \emph{faithful} if it is an edge of the subgraph $G[J]$;
\emph{co-faithful} if it is an edge of the subgraph $G[J']$;
and \emph{unfaithful} otherwise, i.e., if it is an edge of the form~$q_jq_j'$ for some $j\in[2n]$.
Correspondingly, we call a vertex of the matching~$M_0$ faithful (resp., co-faithful, unfaithful) 
if it is a vertex of a faithful (resp., co-faithful, unfaithful) edge.

Suppose that the matching~$M_0$ has $x$ faithful edges, $y$ unfaithful edges,
and~$z$ co-faithful edges. Then we have $x+y+z=3$.
By the symmetry of the graph~$G$, we can suppose that $x\ge z$. 
Then we have $z=0$ or $z=1$.
For each of them, we will construct a perfect matching of the graph~$G$ which extends the matching~$M_0$.

The following lemma serves for \cref{lem2}, by which we can solve the case $z=0$.
The other case $z=1$ can be divided into the cases $(x,y,z)=(1,1,1)$
and $(x,y,z)=(2,0,1)$. We will handle them by \cref{lem3} and \cref{lem4} respectively.

\begin{lem}\label{lem1}
Every $3$-matching of the subgraph~$G[J]$ can be extended to a matching covering the vertex set~$H$.
\end{lem}

\begin{proof}
Suppose that the claim is false. 
Let~$M_0$ be a $3$-matching which is not extendable in this way.
Let~$\tilde{M}$ be an extension of the matching~$M_0$, 
which covers the maximum number of vertices in the set~$H$. 
Without loss of generality, we can suppose that $h_1\notin V(\tilde{M})$. 
By the choice of the matching~$\tilde{M}$, we infer that $q_1,q_{n+1}\in V(M_0)$.
Therefore, we can write 
\[
M_0=\{q_0q_1,\,e_2,\,e_3\},
\] 
where $e_2\in\{q_nq_{n+1},\,q_{n+1}q_{n+2}\}$.
We will find a contradiction by constructing an extension of the matching~$M_0$,
which covers the vertex set~$H$.
It suffices to find a matching~$M$ of the subgraph $G[H]-V(M_0)$ such that 
the subgraph $G[H]-V(e_3)-V(M)$ consists of paths of even orders.
We proceed according to the number of vertices in~$H$ covered by the edge~$e_3$. 

\medskip
\noindent{\it Case 1.}
If $V(e_3)\cap H=\emptyset$, then we can define
\[
M=\begin{cases}
\{h_2q_2\},&\text{if $q_2\notin V(e_3)$},\\[4pt]
\{h_{n-1}q_{n-1}\},&\text{otherwise}.
\end{cases}
\]

\noindent{\it Case 2.}
If $|V(e_3)\cap H|=1$, then we can define $M=\emptyset$.

\medskip
\noindent{\it Case 3.}
If $|V(e_3)\cap H|=2$, then we have $e_3=h_ih_{i+1}$, where $i\in\{2,3,\ldots,n-1\}$. We can define
\[
M=\begin{cases}
\{h_{n-1}q_{n-1}\},&\text{if $i$ is even and $i\ne n-1$};\\[4pt]
\{h_2q_2\},&\text{if $i$ is odd};\\[4pt]
\{h_3q_3\},&\text{if $i=n-1$}.
\end{cases}
\]
This proves \cref{lem1}.
\end{proof}

Here is the lemma by using which the case $z=0$ can be solved.

\begin{lem}\label{lem2}
Suppose that the matching $M_0$ has no co-faithful edges.
Then the subgraph $G[J]$ has a matching $M$ such that 
$M$ covers both the faithful edges and the vertex set~$H$, 
and that $M$ misses any unfaithful vertex. 
\end{lem}

\begin{proof}
We will prove it case by case, according to the number of faithful edges, say, $f$.

The case $f=3$ is \cref{lem1}.

When $f=2$, let $q_jq_j'$ be the unfaithful edge, where $j\in[2n]$. 
Assume that the vertex~$q_{j-1}$ is uncovered by the matching~$M_0$.
By \cref{lem1}, 
the $3$-matching $(M_0-q_jq_j')\cup \{q_{j-1}q_j\}$ 
can be extended to a matching~$M_1$,
which covers the set~$H$.
Then the matching $M_1-q_{j-1}q_j$ is a desired one.
For the same reason, 
\cref{lem2} holds true if the vertex~$q_{j+1}$ is uncovered by the matching~$M_0$. 
Now, we can suppose that both the vertices~$q_{j-1}$ and~$q_{j+1}$ are covered by~$M_0$.
By \cref{lem1}, 
the matching 
\[
M_2=(M_0-q_jq_j')\cup\{h_jq_{j+n}\}
\]
can be extended to a matching, say,~$M_2'$, which covers the set~$H$.
Since all the three neighbors~$q_{j-1}$, $q_{j+1}$ and~$h_j$,
of the vertex~$q_j$ in the subgraph~$G[J]$, are in the matching~$M_2$ which misses the vertex~$q_j$,
we infer that the unfaithful vertex~$q_j$ is not covered by the extended matching~$M_2'$.
Therefore, the matching~$M_2'$ is a desired one.

When $f=1$, we can represent the matching~$M_0$ as 
\[
M_0=\{e_1,\,q_jq_j',\,q_kq_k'\},
\]
where $1\le j<k\le 2n$.
If the vertices~$q_j$ and~$q_k$ are not adjacent in the subgraph $G[Q]$, 
namely, $|j-k|\ne1\pmod{2n}$,
then there exist two distinct vertices~$u$ and~$w$ such that 
\[
u\in \{q_{j-1},\,q_{j+1}\}\backslash V(e_1)
\rmand
w\in \{q_{k-1},\,q_{k+1}\}\backslash V(e_1).
\] 
By \cref{lem1},
the matching $\{uq_j,\,wq_k,\,e_1\}$ can be extended to
a matching, say,~$M_3'$, which covers the set~$H$. 
Then the matching $M_3'-uq_j-wq_k$ is a desired matching. 
Otherwise, the vertices~$q_j$ and~$q_k$ are adjacent.
By \cref{lem1},
the $2$-matching $\{q_jq_k\}\cup\{e_1\}$ 
can be extended to a matching, say,~$M_4'$, which covers the set~$H$. 
Then the matching $M_4'-q_jq_k$ is a desired matching. 

When $f=0$, the vertex set~$Q$ contains exactly three unfaithful vertices. 
Let~$q_j$ be a vertex in~$Q$ which is not unfaithful. 
Let~$M_5$ be the perfect matching of the path $H-h_j$ of order~$n-1$.
Then, the matching $M_5\cup\{q_jh_j\}$ is a desired matching.
This completes the proof of \cref{lem2}.
\end{proof}
\medskip

Now we deal with the first case $z=0$.
Let~$M$ be the matching obtained from \cref{lem2}.
Let~$M'$ be the matching of the subgraph~$G[J']$ 
which is symmetric to the matching~$M$.
In other words, an edge $h_i'h_{i+1}'$ (resp., $h_j'q_j'$, $q_j'q_{j+1}'$)
is in the matching~$M'$ if and only if the edge~$h_ih_{i+1}$ (resp., $h_jq_j$, $q_jq_{j+1}$) 
is in the matching~$M$.
Then the set 
\[
M\cup M'\cup \{q_jq_j'\,\colon\,q_j\in J-V(M)\}
\]
is a perfect matching of the graph~$G$ which covers the matching~$M_0$, as desired.

Next lemma is for the case $(x,y,z)=(1,1,1)$.

\begin{lem}\label{lem3}
For any edge $e$ in the subgraph~$G[J]$, and for any vertex $q_k$ in the set $Q-V(e)$,
the subgraph $G[J]-V(e)-q_k$ has a perfect matching. 
\end{lem}

\begin{proof}
It suffices to find a matching $M$ of the subgraph $G[J]-V(e)-q_k$, such that 
\begin{itemize}
\smallskip\item[(i)]
the path $G[H]-V(M)-V(e)$ is of even order;
\smallskip\item[(ii)]
every path component of the subgraph $G[Q]-V(M)-V(e)-q_k$ is of even order.
\end{itemize}

Below we will construct such a matching~$M$ according to the position of the edge~$e$.

\medskip\noindent{\it Case 1.}
If $V(e)\subset H$, 
we can suppose that $e=h_0h_1$ without loss of generality.
We can take the matching
\[
M=\begin{cases}
\{h_2q_2\},&\text{if $k$ is odd},\\[3pt]
\{h_2q_{n+2}\},&\text{if $k$ is even}.
\end{cases}
\]

\noindent{\it Case 2.}
If $V(e)\cap H\ne\emptyset$ and $V(e)\cap Q\neq\emptyset$,
then we can suppose that $e=h_1q_1$ without loss of generality.
We can take the matching
\[
M=\begin{cases}
\emptyset,&\text{if $k$ is even};\\[3pt]
\{h_0q_0,\,h_2q_2\},&\text{if $k$ is odd}.
\end{cases}
\]

\noindent{\it Case 3.} If $V(e)\subset Q$,
then we can suppose that $e=q_0q_1$ without loss of generality. 
We can take the matching
\[
M=\begin{cases}
\{h_2q_2\},&\text{if $k$ is odd};\\[3pt]
\{h_{n-1}q_{2n-1}\},&\text{if $k$ is even}.
\end{cases}
\]
This completes the proof.
\end{proof}

Now we are ready to solve the case $x=y=z=1$.
By \cref{lem3},
the subgraph $G[J]-V(M_0)$ has a perfect matching.
For the same reason,
the subgraph $G[J']-V(M_0)$ has a perfect matching.
The union of these two matchings and the matching~$M_0$ 
form a desired perfect matching of the graph~$G$.

For the last case $(x,y,z)=(2,0,1)$, we will need the following lemma.

\begin{lem}\label{lem4}
Let $e_0$ be an edge of the subgraph~$G[Q]$. 
Then any $2$-matching of the subgraph~$G[J]$ can be
extended to a near perfect matching of~$G[J]$, 
which covers the vertex set~$H\cup V(e_0)$. 
\end{lem}

\begin{proof}
Let $\{e_1,e_2\}$ be a $2$-matching of the subgraph~$G[J]$. 
It suffices to show that the subgraph~$G[J]-V(e_1\cup e_2)$ has a matching~$M$ such that
\begin{itemize}
\smallskip\item[(i)]
every path component of the subgraph $G[H]-V(e_1\cup e_2\cup M)$ is of even order;
\smallskip\item[(ii)]
at most one of the path components of 
the subgraph $G[Q]-V(e_1\cup e_2\cup M)$ is of odd order;
\smallskip\item[(iii)]
if the subgraph $G[Q]-V(e_1\cup e_2\cup M)$ has an isolated vertex,
then the isolated vertex is not an end of the edge $e_0$.
\end{itemize}
If such a matching $M$ exists, 
then the subgraph $G[Q]-V(e_1\cup e_2\cup M)$ has a near perfect matching~$M'$,
such that the matching $M\cup M'\cup\{e_1,e_2\}$ covers the vertex set $V(e_0)$.
The desired result follows immediately.

Below we will seek the above matching~$M$.
According to the positions of the edges~$e_1$ and~$e_2$,
we have $6$ cases to treat.

\medskip\noindent{\it Case 1.} 
Both the edges $e_1$ and~$e_2$ are from the subgraph~$G[H]$.

The subgraph $G[H]-V(e_1\cup e_2)$
consists of two paths of different parities of orders,
where the path of even order might be empty.
Let $h_i$ be an end of the path of odd order. 
We can take the matching $M=\{h_iq_i\}$.

\medskip
\noindent{\it Case 2.} 
The edge~$e_1$ is from the subgraph~$G[H]$,
and the edge $e_2=h_jq_j$ for some $j\in[2n]$.

We can suppose that $e_1=h_0h_1$ without loss of generality.
Then we can take the matching 
\[
M=\begin{cases}
\{h_jq_j\,\colon\ 2\le j\le n-1\},&\text{if $e_2\in\{h_jq_j\,\colon\ 2\le j\le n-1\}$};\\[4pt]
\{h_jq_j\,\colon\ n+2\le j\le 2n-1\},&\text{otherwise, i.e., if $e_2\in\{h_jq_j\,\colon\ n+2\le j\le 2n-1\}$}.
\end{cases}
\]

\noindent{\it Case 3.} 
The edge $e_1$ is from the subgraph $G[H]$,
and the edge $e_2$ is from the subgraph~$G[Q]$.

Without loss of generality, we can suppose that 
$e_2=q_0q_1$ and $e_1=h_ih_{i+1}$, where $0\le i\le n-1$.
Moreover, by symmetry, we can suppose that $0\le i\le (n-1)/2$ without loss of generality.
We can take the matching 
\[
M=\begin{cases}
\{h_{i+2}q_{i+2}\},&\text{if the vertex~$q_2$ is not an end of the edge~$e_0$};\\[4pt]
\{h_3q_{n+3}\},&\text{otherwise}.
\end{cases}
\]

\medskip
\noindent{\it Case 4.} 
Both the edges~$e_1$ and $e_2$ have the form $h_jq_j$, where $j\in[2n]$.

Without loss of generality, we can suppose that $e_1=h_1q_1$.
Then we can take the matching
\[
M=\begin{cases}
\{h_jq_j\,\colon\ 2\le j\le n\},&\text{if }e_2\in\{h_jq_j\,\colon\ 2\le j\le n\};\\[4pt]
\{h_jq_j\,\colon\ n+2\le j\le 2n\},&\text{otherwise, i.e., if }e_2\in\{h_jq_j\,\colon\ n+2\le j\le 2n\}.
\end{cases}
\]

\noindent{\it Case 5.} 
The edge $e_1$ has the form $h_jq_j$ for some $j\in[2n]$,
and the edge $e_2$ is from the subgraph~$G[Q]$.

Without loss of generality, we can suppose that $e_2=q_0q_1$, and that $j\in[n]$.
Then we can take the matching
\[
M=\begin{cases}
\{h_2q_2,\,h_4q_4\},&\text{if $q_2\in V(e_0)$, and $j=3$};\\[4pt]
\emptyset,&\text{otherwise}.
\end{cases}
\]

\noindent{\it Case 6.}
Both the edges $e_1$ and $e_2$ are from the subgraph $G[Q]$.
\medskip

The subgraph $G[Q]-V(e_1\cup e_2)$ consists of two paths, where one of them might be empty. 
Since the sum $2n-4$ of their orders is even, the two paths have the same parity of orders.
Since $2n-4\ge 6$, there is at most one path is of order~$1$.
If such an isolated vertex exists, say,~$q_j$, then we can take the matching~$M$ to be the edge~$h_jq_j$.
Otherwise, we can take the matching~$M$ to be the edge~$h_kq_k$,
where~$q_k$ is an end of the path whose order is larger, or an end of any path when the two paths have the same orders.

This completes the proof of \cref{lem4}.
\end{proof}

The last case $(x,y,z)=(2,0,1)$ can be done as follows.
Let $M_0=\{e_1,e_2,e_3\}$, where the edges~$e_1$ and~$e_2$ are faithful,
and the edge~$e_3$ is co-faithful.
By Lemma~\ref{lem4},
the subgraph~$G[J]$ has a near perfect matching~$M_1$ covering
the $2$-matching $\{e_1,e_2\}$, such that the associated uncovered vertex~$q_j$
satisfies that its symmetric vertex~$q_j'$ is uncovered by the edge~$e_3$.
By \cref{lem3}, the subgraph~$G[J']-V(e_3)-q_j$ has a perfect matching~$M_2$.
Hence the matching $M_1\cup M_2\cup \{e_3,\,q_jq_j'\}$ 
is a perfect matching of the graph~$G$ which extends the matching~$M_0$.
This completes the proof of \cref{thm:bowtie}.
\end{proof}

Since the bow-tie graph $C_6\bowtie P_n$ can be embedded onto the Klein bottle,
\cref{thm:bowtie} implies immediately that there is an infinite number of $3$-extendable graphs 
which are $N_2$-embeddable. For completeness, we pose the following conjecture.
\begin{conj}
For any even integer $m\ge6$ and any odd integer $n\ge 5$, 
the graph $C_m\bowtie P_n$ is $3$-extendable.
\end{conj}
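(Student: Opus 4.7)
The plan is to mimic the strategy of \cref{thm:bowtie} (the $m=6$ case), exploiting the added flexibility afforded by having more rows when $m\ge 8$. Let $G = C_m \bowtie P_n$ and let $M = \{e_1, e_2, e_3\}$ be a $3$-matching. Setting $s = m/2$, the bow-tie fixes rows $1$ and $s+1$ and pairs row~$k$ with row~$m+2-k$ for $k \in \{2, \ldots, s\}$, so $G$ decomposes into $s+1$ cyclic ``layers'': rows $1$ and $s+1$ each close up into cycles of length~$n$, while each pair $\{k,\,m+2-k\}$ together with its two bow-tie edges forms a cycle of length~$2n$. Adjacent layers are joined by vertical column edges. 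This generalizes the $H,Q,H',Q'$ structure used in the proof of \cref{thm:bowtie}.

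The first step is a separator reduction. For most configurations of $M$, I would look for a pair of adjacent middle columns $T_j \cup T_{j+1}$ with $2 \le j \le n-2$ that serves as an $M$-separator: the subgraph $G[T_j \cup T_{j+1}] \cong C_m \times P_2$ is $2$-extendable by \cref{thm:GP92}, while the complementary subgraph (two shorter cylinders joined by the bow-tie edges) should, by a direct matching analysis, admit a perfect matching extending the remainder of~$M$. Dually, I would try to use a pair of adjacent layers from the layered decomposition above as an $M$-separator, in the spirit of \cref{lem:CC}. Together these two reductions should dispose of any $3$-matching whose edges are spread out in either the horizontal or the vertical direction.

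The residual cases are those in which $M$ resists both types of separators, which forces the three edges to be concentrated around the two special layers (rows $1$ and $s+1$) or to involve bow-tie edges. For these, the plan is to extend the faithful/co-faithful/unfaithful classification from \cref{thm:bowtie} to the layered decomposition, to prove analogues of \cref{lem1}--\cref{lem4} at each inner $2n$-cycle, and then to assemble the layer-local matchings into a global extension of $M$ using the vertical column edges. The main obstacle I anticipate is precisely this layer-by-layer assembly: for $m=6$ there are only two inner layers on each symmetric side, so the case analysis is manageable; for larger $m$ the number of inner layers grows, and one must ensure that the matchings chosen in neighboring layers are mutually compatible. Formulating a single unifying local-extension lemma---strong enough to subsume \cref{lem1}--\cref{lem4} at every layer simultaneously, and robust under the vertical adjacencies between layers---seems to be the key technical hurdle that would need to be overcome.
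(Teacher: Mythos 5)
This statement is posed in the paper as an open conjecture: the authors prove only the case $m=6$ (\cref{thm:bowtie}) and explicitly leave general even $m\ge 6$ unresolved, so there is no paper proof to compare against. Your proposal is a strategy sketch rather than a proof, and the gaps it leaves are exactly the ones that make the general case hard. Concretely: (1) your column-separator reduction asserts that $G-T_j-T_{j+1}$ ``should, by a direct matching analysis, admit a perfect matching extending the remainder of $M$,'' but this complement consists of two cylinders $C_m\times P_{j-1}$ and $C_m\times P_{n-j-1}$ whose numbers of columns have opposite parities (since $n$ is odd), joined only by the $m$ bow-tie edges; whether the residual matching extends across this is precisely the content that needs proof, not a routine check. (2) The layer-separator reduction in the spirit of \cref{lem:CC} needs the complement of two adjacent layers to be a known extendable graph, but removing an inner $2n$-cycle layer disconnects the remaining layers into two pieces that are not of the form $C_m\times P_k$ or $C_m\bowtie P_k$, so no existing theorem applies. (3) The analogues of \cref{lem1}--\cref{lem4} are neither stated nor proved, and the assembly of layer-local matchings---which you correctly identify as the key hurdle---is left entirely open. (4) A structural obstruction you do not address: the $m=6$ argument leans on the up-down mirror symmetry to build a matching on $J$, reflect it to $J'$, and stitch with unfaithful edges $q_jq_j'$; for general $m$ the reflection $v_{ij}\mapsto v_{m/2+2-i,\,j}$ fixes the middle inner layer whenever $m/2-1$ is odd (e.g.\ $m=10$), so the ``reflect and stitch'' template breaks and a genuinely new stitching argument is required. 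In short, the decomposition is the right starting point, but every step that distinguishes this from the already-proved $m=6$ case remains unproved, so the conjecture stands.
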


\section{Proof of \cref{thm:k'}}\label{sec:pf}

Recall that $\mu'(\Sigma)$ is the minimum integer~$k$
such that there is no $\Sigma$-embeddable $k$-extendable non-bipartite graphs.
It follows that $\mu'(\Sigma)\le\mu(\Sigma)$.
This section is devoted to find out $\mu'(\Sigma)$.
We will need the following lemma.

\begin{lem}\label{lem:2k+2}
Let $k\ge 1$. Any connected $k$-extendable graph of order~$2k+2$ 
is either the complete graph~$K_{2k+2}$, or the complete bipartite graph~$K_{k+1,\,k+1}$.
\end{lem}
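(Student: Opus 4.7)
The plan is to argue the contrapositive: I assume $G$ is a connected $k$-extendable graph with $|G|=2k+2$ that is not $K_{2k+2}$, and aim to show $G=K_{k+1,k+1}$. The starting observation is that if $u,v$ are non-adjacent vertices of $G$, then $G-u-v$ has no perfect matching. Indeed, any such matching would be a $k$-matching of $G$ (since $G-u-v$ has $2k$ vertices) whose only possible extension to a perfect matching of $G$ would add the edge $uv$, contradicting $uv\notin E(G)$.

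Next I invoke Tutte's theorem (\cref{thm:Tutte}) applied to $G-u-v$ to obtain a set $S\subseteq V(G)\setminus\{u,v\}$ with $o(G-u-v-S)\ge|S|+2$, where the gap $2$ comes from the parity constraint. Writing $T=S\cup\{u,v\}$, this reads $o(G-T)\ge|T|$, while the existence of a perfect matching of $G$ forces $o(G-T)\le|T|$; hence $o(G-T)=|T|$. Now $o(G-T)\ge2$ shows that $T$ is a cutset, so by \cref{thm:ext:basic} we have $|T|\ge\kappa(G)\ge k+1$. Conversely the $|T|$ odd components of $G-T$ each contain at least one vertex, so $|T|\le 2k+2-|T|$, forcing $|T|\le k+1$. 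Thus $|T|=k+1$, every odd component of $G-T$ is a single vertex, and there are no even components; in particular the complement $A=V(G)\setminus T$ is an independent set of size $k+1$.

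The $(k+1)$-connectivity of $G$ now yields the bipartite structure on one side. Every $a\in A$ satisfies $d(a)\ge k+1$, and all its neighbors lie in $T$ (as $A$ is independent); since $|T|=k+1$, the vertex $a$ must be adjacent to every vertex of $T$. Hence the bipartite graph between $A$ and $T$ is complete. To finish, I want to show $T$ itself is independent. Suppose on the contrary that some edge $xy$ lies inside $T$. The complete bipartite graph between $A$ (of size $k+1$) and $T\setminus\{x,y\}$ (of size $k-1$) carries a matching $M'$ of size $k-1$ covering all of $T\setminus\{x,y\}$; then $\{xy\}\cup M'$ is a $k$-matching of $G$ whose two uncovered vertices both lie in $A$ and therefore are non-adjacent, contradicting $k$-extendability. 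So $T$ is independent, $G$ is complete bipartite between $A$ and $T$, and $G=K_{k+1,k+1}$.

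The key step — and the one that makes the argument work — is the pigeonhole bounds that force $|T|=k+1$ \emph{exactly}, so that the complement $A$ also has size $k+1$; once this balance is in place, the minimum-degree bound from \cref{thm:ext:basic} handles one side of the bipartition while $k$-extendability, applied to a matching built from a hypothetical edge in $T$, eliminates any extra edges on the other side.
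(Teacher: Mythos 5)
Your proof is correct, and it takes a genuinely different route from the paper's. The paper first disposes of $k=1$ by hand and then, for $k\ge2$, invokes the Lou--Yu theorem (\cref{thm:LY:bip}) to split into two cases: if $G$ is bipartite, a degree count using $\delta(G)\ge k+1$ forces $K_{k+1,k+1}$; if not, then $\kappa(G)\ge 2k$, so for non-adjacent $u,v$ the graph $G-u-v$ has minimum degree at least $2k-2\ge|G-u-v|/2$ and hence a Hamilton cycle by Ore's theorem, yielding a perfect matching of $G-u-v$ that contradicts $k$-extendability. You instead start from the same observation that $G-u-v$ has no perfect matching, but then extract a Tutte set: the parity-corrected deficiency gives a set $T\supseteq\{u,v\}$ with $o(G-T)=|T|$, and the two pigeonhole bounds $\kappa(G)\ge k+1$ (from \cref{thm:ext:basic}) and $|T|\le 2k+2-|T|$ pin down $|T|=k+1$ exactly, whence the complement is an independent set of singleton odd components; the degree bound then makes the bipartition complete on one side, and a cleverly chosen $k$-matching built from a hypothetical edge inside $T$ kills any edge on the other side. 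Your argument is more self-contained (it needs only Tutte's theorem and the basic connectivity bound, both already in the paper's preliminaries, and avoids both Lou--Yu and the Hamiltonicity input) and it handles $k=1$ uniformly rather than as a separate case; the paper's argument is shorter on the page but leans on heavier imported results. Every step of yours checks out, including the parity gap of $2$ in the Tutte deficiency and the count showing the two vertices left uncovered by $\{xy\}\cup M'$ both lie in the independent set $A$.
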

\begin{proof}
It is easy to show for the case $k=1$.
Below we let $k\ge 2$.
Let~$G$ be a connected $k$-extendable graph with $|G|=2k+2$.
By \cref{thm:LY:bip}, either the graph~$G$ is bipartite or we have $\kappa(G)\ge 2k$.

In the former case, any vertex in the part with larger order has degree at most the order of 
the other part, and thus, at most $|G|/2=k+1$.
By \cref{thm:ext:basic}, we have $\delta(G)\ge k+1$.
Therefore, both parts of the graph $G$ has order $k+1$.
Since $\delta(G)\ge k+1$, we infer that $G=K_{k+1,k+1}$.

In the latter case, we suppose to the contrary that the graph~$G$ is not complete.
Then~$G$ has a pair $(u,v)$ of non-adjacent vertices. 
Since the graph~$G$ is $k$-extendable and is of order~$2k+2$, 
we infer that the subgraph $G'=G-u-v$ does not have a perfect matching.
On the other hand, any graph~$H$ of even order with $\delta(H)\ge |H|/2$ has a Hamilton circuit; see Ore~\cite{Ore60}.
Since $\delta(G')\ge 2k-2\ge 2k/2=|G'|/2$, we deduce that the subgraph $G'$ has a Hamilton circuit,
and a perfect matching in particular, a contradiction. This completes the proof.
\end{proof}

Now we are in a position to show \cref{thm:k'}.

\begin{proof}
Let $\Sigma$ be a surface of characteristic~$\chi$.
Let $\mu'(\Sigma)$ to be the minimum integer $k$
such that every $\Sigma$-embeddable non-bipartite graph is not $k$-extendable.
Note that the inequality $\mu'(\Sigma)\le\mu(\Sigma)$ holds for any surface~$\Sigma$.

First, we deal with the sporadic cases that $\chi\ge -1$.
For the sphere~$S_0$,
we have $\mu'(S_0)\le\mu(S_0)=3$ by \cref{thm:k}.
On the other hand, it is clear that the graph $P_4\times C_5$ is planar and non-bipartite. 
Since it is $2$-extendable by \cref{thm:PC}, 
we deduce that 
\[
\mu'(S_0)=3.
\]
By \cref{thm:k}, we have $\mu(N_1)=3$. Thus, we infer that $\mu'(N_1)\le 3$.
Since every planar graph is $N_1$-embeddable, we deduce that $\mu'(N_1)\ge \mu'(S_0)=3$.
Therefore, we conclude that 
\[
\mu'(N_1)=3.
\]
For the torus $S_1$, we have $\mu'(S_1)\le\mu(S_1)=4$ by \cref{thm:k}.
On the other hand,
it is clear that the graph $C_6\times C_5$ is toroidal and non-bipartite.
Since it is $3$-extendable by \cref{thm:CC}, we infer that 
\[
\mu'(S_1)=4.
\]
For the Klein bottle $N_2$, we have $\mu'(N_2)\le\mu(N_2)=4$ by \cref{thm:k}.
On the other hand, 
the $N_2$-embeddable non-bipartite graph $C_6\bowtie P_5$ is $3$-extendable
by \cref{thm:bowtie}.
Thus we have 
\[
\mu'(N_2)=4.
\]
Along the same line, we have $\mu'(N_3)\le 4$ by \cref{thm:k}.
Since $\mu'(N_3)\ge \mu'(N_2)=4$, we infer that
\[
\mu'(N_3)=4.
\]

Below we can suppose that $\chi\le-2$.
Write
\begin{equation}\label{def:n}
n=\lfloor(7+\sqrt{49-24\chi})/4\rfloor.
\end{equation}
Since $\chi\le-2$, one may estimate that $n\ge4$.
By \cref{thm:g:K},
it is direct to check that the complete graph $K_{2n}$ is $\Sigma$-embeddable.
It is obvious that the graph~$K_{2n}$ is both $(n-1)$-extendable and non-bipartite.
Thus we have $\mu'(\Sigma)\geq n$.

Suppose, by way of contradiction, that $\mu'(\Sigma)>n$.
Then there exists a the graph~$G$, which is $\Sigma$-embeddable,
$n$-extendable, and non-bipartite.

The $n$-extendability implies that the graph $G$ has at least $2n+2$ vertices.
If $|G|=2n+2$, then the graph~$G$ must be the complete graph~$K_{2n+2}$ 
by \cref{lem:2k+2}.
By computing the genus and the non-orientable genus of the graph~$K_{2n+2}$ directly,
we see that $K_{2n+2}$ is not $\Sigma$-embeddable. Thus we have $|G|\ge 2n+4$.

Let $v$ be a control point in an embedding of the graph~$G$ on the surface~$\Sigma$.

Assume that $|G|\le 4n$.
By \cref{thm:LY:bip}, we deduce that the connectivity $\kappa(G)$ is at least~$2n$. 
It follows that
\[
d(v)\ge\delta(G)\ge\kappa(G)\ge2n.
\]
Let $x$ be the number of triangles containing~$v$.
Let $y=d(v)$. 

When $y=2n$, since the graph~$G$ is $n$-extendable, we infer that $x\le 2n-2$.
By \cref{lem:ctrl}, we deduce that
\begin{equation}\label{pf:2}
{2n+1\over6}
={y\over 4}-\frac{2n-2}{12}
\le{y\over 4}-{x\over12}
\le1-{\chi\over 2n+4}.
\end{equation}
Solving it we find that $n\le (1+\sqrt{81-24\chi})/4$.
By \cref{def:n}, we obtain that 
\[
{7+\sqrt{49-24\chi}\over 4}-1
<\biggl\lfloor{7+\sqrt{49-24\chi}\over 4}\biggr\rfloor
\le{1+\sqrt{81-24\chi}\over 4},
\]
which implies that $\chi>0$, a contradiction.
Otherwise $y\ge 2n+1$. By \cref{lem:ctrl}, we have
\[
{2n+1\over6}
\le{y\over6}
\le1-{\chi\over 2n+4},
\]
which is same to Ineq.~\eqref{pf:2} and thus impossible.

Now we are led to the case that $|G|\geq 4n+2$. 
Assume that $x\le 2n-2$.
By \cref{lem:ctrl} and \cref{lem:d:x}, 
we have
\[
\frac{n+1}{4}
\le{n+1+\lceil x/2\rceil\over4}-{x\over 12}
\le{y\over 4}-{x\over12}
\le1-{\chi\over 4n+2},
\]
Solving it we find that $n\le(5+\sqrt{49-16\chi})/4$.
By \cref{def:n}, we obtain that 
\begin{equation}\label{ineq:2}
{7+\sqrt{49-24\chi}\over 4}-1
<\biggl\lfloor{7+\sqrt{49-24\chi}\over 4}\biggr\rfloor
\le{5+\sqrt{49-16\chi}\over 4}.
\end{equation}
Solving the above inequality we find that $\chi\in\{-6,-5,-4,-3,-2\}$.
Substituting each of these five values of $\chi$ into Ineq.~\eqref{ineq:2},
we get a contradiction.
Otherwise, we have $x\ge2n-1$. Then \cref{lem:ctrl} gives that 
\[ 
{n+1\over 4}
\le {2n+1\over 6}={y\over 6}
\le1-{\chi\over 4n+2},
\]
the same contradiction.
This completes the proof of \cref{thm:k'}.
\end{proof}

At the end of this paper, we would like to share the happy approach of finding Formula~\eqref{ans:mu'}.
Our previous result~\cite{LW13X} on $(n,k)$-graphs is as follows.

\begin{thm}[Lu, Wang]\label{thm:nk}
Let $\Sigma$ be a surface of characteristic $\chi$.
Let $\mu(n,\Sigma)$ be the minimum integer~$k$ such that there is no $\Sigma$-embeddable $(n,k)$-graphs.
Then for $n\ge 1$, we have
\begin{equation}\label{ans:nk}
\mu(n,\Sigma)=\begin{cases}
\max(0,\,3-\lceil n/2\rceil),&\text{if the surface~$\Sigma$ is homeomorphic to the sphere};\\[3pt]
\max(0,\,\lfloor (\,7-2n+\sqrt{49-24\chi}\,)/4\rfloor),&\text{otherwise}.
\end{cases}
\end{equation}
\end{thm}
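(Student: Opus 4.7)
The plan is to establish $\mu(n,\Sigma) \ge m$ by a construction and $\mu(n,\Sigma) \le m$ by an Euler-contribution argument, where $m$ denotes the right-hand side of \eqref{ans:nk}. For the lower bound, I would exhibit the complete graph $K_{2m+n}$ as a $\Sigma$-embeddable $(n,m-1)$-graph. The $(n,m-1)$-property is automatic: deleting any $n$ vertices yields $K_{2m}$, which is $(m-1)$-extendable. The embeddability is read off from \cref{thm:g:K}; solving $\lceil (2m+n-3)(2m+n-4)/6 \rceil \le 2-\chi$ in the non-orientable case (and the analogue for orientable surfaces) for the largest admissible $m$ reproduces the floor expression $\lfloor (7-2n+\sqrt{49-24\chi})/4\rfloor$. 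For the sphere, only $K_4$ among complete graphs is planar, so the construction must be supplemented by small planar $(n,k)$-graphs---for example products of short paths with odd cycles, in the spirit of \cref{thm:PC}---realizing each residue of $n$ modulo $2$, which is what produces the $\lceil n/2\rceil$ correction.

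For the upper bound, suppose for contradiction that $G$ is a $\Sigma$-embeddable $(n,m)$-graph. First I would derive the structural estimates $|G|\ge 2m+n+2$ (since each $G-S$ must have a perfect matching on at least $2m+2$ vertices) and $\delta(G)\ge m+n+1$ (apply \cref{thm:ext:basic} to $G-S$ with $S$ chosen to lie in $N_G(u)$, forcing $d_G(u)-n\ge m+1$). Next I would prove an $(n,m)$-analog of \cref{lem:d:x}: for a vertex $v$ in $x$ triangular faces of the embedding, by selecting $S$ with $|S|=n$ and $S\cap N(v)=\varnothing$ (possible once $|G|-1-d(v)\ge n$, and the edge case $|G|-1-d(v)<n$ forces $|G|$ to be too small and is handled separately), all $x$ triangles at $v$ persist in the $m$-extendable graph $G-S$; combining \cref{lem:d:x} for $G-S$ with the extra contribution of the $n$ deleted vertices yields $d_G(v)\ge m+n+1+\lceil x/2\rceil$ when $x\le 2m-2$ and $d_G(v)\ge 2m+n+1$ otherwise.

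Then I fix the embedding, take a control point $v$, and plug the sharpened degree-triangle estimate into \cref{lem:ctrl}. In the generic range $x\le 2m-2$ this gives
\[
\frac{m+n+1}{4} \le \frac{m+n+1+\lceil x/2\rceil}{4}-\frac{x}{12} \le 1-\frac{\chi}{2m+n+2},
\]
and the complementary range uses $d(v)/6\le 1-\chi/|G|$ with $d(v)\ge 2m+n+1$. Either way the resulting inequality, after the same algebraic rearrangement performed at the end of the proof of \cref{thm:k'}, collapses to $(4m+2n-7)^2\le 49-24\chi$, which contradicts the definition of $m$.

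The main obstacle will be the sporadic low-characteristic regime, mirroring the list $\chi\in\{-2,\ldots,-6\}$ encountered in the proof of \cref{thm:k'}: here the principal Euler-contribution inequality is tight and a hypothetical extremal $(n,m)$-graph is forced to be essentially complete or complete bipartite on $2m+n+2$ vertices, which one excludes by combining \cref{thm:g:K} with an $(n,m)$-analog of \cref{lem:2k+2}. A secondary difficulty is the sphere case: Plummer's theorem that no planar graph is $3$-extendable serves as the base case of an induction on $n$, and one must verify by hand the small planar $(n,k)$-graphs delivering the $\lceil n/2\rceil$ correction at each step.
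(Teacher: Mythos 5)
A preliminary remark: the paper you were given never proves \cref{thm:nk} at all; it quotes the statement from the authors' earlier work \cite{LW13X} and only comments that the formula was ``obtained by laborious computations.'' So your proposal can only be measured against the Dean--Plummer machinery that this paper's proof of \cref{thm:k'} exemplifies (and which is surely the machinery behind \cref{thm:nk} as well). Your outline follows that machinery in broad strokes --- complete graphs for the lower bound, control points and degree--triangle estimates for the upper bound --- but it breaks at exactly the points where this machinery is known to be delicate.

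On the lower bound, the complete-graph construction fails on the Klein bottle. You propose to read the embeddability of $K_{2m+n}$ ``off from \cref{thm:g:K},'' but part (i) of that theorem records the exception $\tg(K_7)=3$, and your construction hits it: for $\Sigma=N_2$ (so $\chi=0$) the claimed value is $m=\lfloor(14-2n)/4\rfloor$, and each of $n=1,3,5$ gives $2m+n=7$, so you would need $K_7$ to embed in the Klein bottle, which is false. Thus complete graphs cannot certify $\mu(n,N_2)\ge m$ for these $n$, and an ad hoc non-complete family is required. This is not a removable technicality: the analogous failure for $\mu'$ is precisely why the present paper must construct the bow-tie graphs $C_6\bowtie P_n$ and prove \cref{thm:bowtie}.

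On the upper bound there are two genuine gaps. First, your sharpened degree lemma does not follow from the argument you give: if $S\cap N(v)=\varnothing$, then $d_{G-S}(v)=d_G(v)$, so \cref{lem:d:x} applied to the $m$-extendable graph $G-S$ yields only $d_G(v)\ge m+1+\lceil x/2\rceil$; deleted non-neighbours make no ``extra contribution'' to the degree of $v$. To gain the summand $n$ you must place $S$ inside $N(v)$ while keeping the $x$ triangular faces (or at least their triangle edges) alive, and you must prove that $n$ such neighbours exist --- this is exactly where the laborious case analysis lives. Second, even granting $d_G(v)\ge m+n+1+\lceil x/2\rceil$, your single control-point inequality with $|G|\ge 2m+n+2$ is simply not a contradiction: for $n=1$ and $\chi=-100$ the formula gives $m=13$, and then $(m+n+1)/4=3.75$ while $1-\chi/(2m+n+2)=1+100/29\approx 4.45$. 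The contradiction only materializes when $|G|$ is large (say $|G|\ge 4m+n+2$); the intermediate regime must be handled the way the proof of \cref{thm:k'} handles it, via the connectivity bound of \cref{thm:LY:bip} applied to $G-S$ --- which additionally requires ruling out that $G-S$ is bipartite (possible even when $G$ is not, and nowhere addressed by you) --- together with an order-exactly-minimal case killed by an analogue of \cref{lem:2k+2}. Finally, your closing algebra is wrong in two ways: the inequalities one actually derives involve $\sqrt{81-24\chi}$ and $\sqrt{49-16\chi}$ and leave finitely many sporadic $\chi$ to eliminate individually, just as in \eqref{ineq:2}; and the inequality $(4m+2n-7)^2\le 49-24\chi$ that you claim to reach is not a contradiction at all --- it is \emph{implied} by the definition of $m$ as the floor, since that definition says exactly $4m+2n-7\le\sqrt{49-24\chi}$. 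A genuine contradiction must push $m+1$ below the defining expression, which is a strictly stronger conclusion than the one you state.
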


While Formula~(\ref{ans:nk}) was obtained by laborious computations,
we discover Formula~(\ref{ans:mu'}) by the guess-and-check strategy.
Although $(0,k)$-graphs are exactly $k$-extendable graphs, Formula~(\ref{ans:nk}) 
is valid under the premise $n\ge1$.
Nevertheless, it was Formula~(\ref{ans:nk}) by which we were inspired to guess Formula~(\ref{ans:mu'}) out.

\section*{Acknowledgements}
Lu is supported by the National Natural Science Foundation of China
(Grant Nos.~$11101329$ and~$11471257$).
Wang is supported by the National Natural Science Foundation of China (Grant No.~$11101010$).

\end{document}